\documentclass{article}
\usepackage{gensymb}

\usepackage{amsthm}
\usepackage{amssymb}
\usepackage{amsmath}
\usepackage{mathabx}
\usepackage{biblatex}
\usepackage{quiver}
\newtheorem{proposition}{Proposition}[section]
\newtheorem{theorem}{Theorem}[section]
\newtheorem{corollary}{Corollary}[theorem]
\newtheorem{lemma}[theorem]{Lemma}

\theoremstyle{definition}
\newtheorem{definition}{Definition}[section]
\newtheorem{remark}{Remark}[section]

\newtheorem{example}{Example}[section]

\newcommand{\func}[3]{#1 \colon #2 \rightarrow #3}

\newcommand{\acts}[0]{\text{ }\rotatebox[origin=c]{180}{$\circlearrowright$} \text{ }}
\usepackage{graphicx}
\addbibresource{RFGWP.bib}

\title{Residual Finiteness of Graph Wreath Products}
\author{Amy Needham}
\date{September 2025}

\begin{document}

\maketitle
\begin{abstract}
    We prove necessary and sufficient conditions for when graph wreath products are residually finite, generalising known results for the permutational wreath product and free product cases
\end{abstract}
\section{Introduction}
The graph wreath product of groups is a natural, common generalisation of both the wreath product and the free product. In \cite{KM16}, Kropholler and Martino found sufficient (and some necessary) conditions for certain finiteness properties of graph wreath products. In this paper we study a further finiteness property for graph wreath products, namely residual finiteness, giving necessary and sufficient conditions under which a graph wreath product of two groups, $\Gamma$ and $\Delta$, is residually finite in terms of properties of the graph and the two groups.
\\The behaviour of residual finiteness under the free product and the wreath product is well-understood. It is well known that the free product of two residually finite groups is always residually finite, however in the case of wreath products, there are restrictive criteria due to Gr\"unberg \cite{G57} in the regular case, and Cornulier \cite{C14} in the general case, as to when the resulting group is residually finite.\\ The theorem we shall prove restricts to give the aforementioned results in the case of the empty graph on $\Gamma$ (for free products), and the complete graph on a set (for wreath products).
\begin{definition}
    A group, $G$ is \textit{residually finite} if for every $g \in G \setminus \{e\}$, there is a finite group $Q$ and a homomorphism $\func{\phi}{G}{Q}$ such that $\phi(g) \neq e_Q$. 
\end{definition}
Specifically, we will prove the following: \begin{theorem}
    For groups $\Gamma, \Delta$ a graph $G = (V, E)$ on which $\Gamma$ acts, $G(\Delta) \rtimes \Gamma$ is residually finite if and only if \begin{enumerate}
        \item $\Gamma, \Delta$ are residually finite;
        \item Either $\Delta$ is abelian and for all neighbouring $v, w \in V$ there is a finite index subgroup $K \leq \Gamma$ such that $w \notin Kv$, or for all $v$ there is some finite index subgroup $K \leq \Gamma$ such that $Kv \cap N(v) = \emptyset$;
        \item for all $v, w \in X$ not neighbouring and not equal there is a finite index $K \leq \Gamma$ such that $Kw \cap (N(v) \cup \{v\}) = \emptyset$.
        
    \end{enumerate}
\end{theorem}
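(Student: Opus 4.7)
For necessity, the plan is to exhibit a witness element for each condition and extract the required finite-index subgroup of $\Gamma$ from any finite quotient separating it. Residual finiteness of $\Gamma$ follows by killing the normal $G(\Delta)$, and of $\Delta$ by the vertex embedding $\Delta\hookrightarrow G(\Delta)$. For condition (3), given $v\neq w$ non-adjacent and $\delta\in\Delta\setminus\{e\}$, I would separate the graph-product commutator $[\delta_v,\delta_w]$; the finite-index subgroup $K=\ker\phi\cap\Gamma$ then cannot contain any $k$ with $kw\in N(v)\cup\{v\}$, because such a $k$ would force $\phi(\delta_w)=\phi(\delta_{kw})$ to either equal $\phi(\delta_v)$ (when $kw=v$) or to commute with $\phi(\delta_v)$ (when $kw\in N(v)$, using the adjacency relation in the graph product), trivialising the image of the commutator. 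Condition (2) splits along a dichotomy: for nonabelian $\Delta$ I would pick $\delta,\delta'\in\Delta$ with $[\delta,\delta']\neq e$ and separate $[\delta_v,\delta'_v]$ to force $Kv\cap N(v)=\emptyset$; for abelian $\Delta$ I would separate $\delta_v\delta_w^{-1}$ for each neighbour pair to force $w\notin Kv$.

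For sufficiency, take nontrivial $g=d\gamma\in G(\Delta)\rtimes\Gamma$. If $\gamma\neq e$, the composition $G(\Delta)\rtimes\Gamma\twoheadrightarrow\Gamma\twoheadrightarrow\Gamma/K$ suffices for any finite-index $K$ missing $\gamma$, so the essential case is $g=d\in G(\Delta)\setminus\{e\}$. Fix a reduced graph-product normal form $d=\delta^{(1)}_{v_1}\cdots\delta^{(n)}_{v_n}$ (each $\delta^{(i)}\neq e$) and set $U=\{v_1,\ldots,v_n\}$, $U^{\ast}=\Gamma U$. I would choose a finite-index normal $K\trianglelefteq\Gamma$ as the normal core of the intersection of the finitely many finite-index subgroups supplied by (2) and (3) applied to singletons and pairs from $U$, and a finite-index $N\trianglelefteq\Delta$ with every $\delta^{(i)}\notin N$. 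Since $U^{\ast}$ has at most $|U|$ many $\Gamma$-orbits and $[\Gamma:K]<\infty$, the set $U^{\ast}/K$ is finite, so $(U^{\ast}/K)(\Delta/N)\rtimes(\Gamma/K)$ is a finite group. The separating homomorphism is the composite of the $\Gamma$-equivariant retraction $G(\Delta)\twoheadrightarrow U^{\ast}(\Delta)$ killing generators off $U^{\ast}$ (well-defined because $U^{\ast}$ is $\Gamma$-invariant and graph products admit such vertex-retractions), the vertex-collapse $U^{\ast}(\Delta)\twoheadrightarrow(U^{\ast}/K)(\Delta/N)$ sending $\delta_v\mapsto(\delta N)_{[v]}$, where the quotient graph has edges $[v]\sim[w]$ whenever some pair of representatives is adjacent in $G$, and the projection $\Gamma\twoheadrightarrow\Gamma/K$ on the acting factor.

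The main obstacle, and the step where the conditions do all their work, is showing that the image of $d$ is nontrivial in this finite target. The structural lemma to establish is that the induced subgraph of the quotient graph on $[U]=\{[v_1],\ldots,[v_n]\}$ is isomorphic, as an abstract graph, to the induced subgraph of $G$ on $U$: condition (3) rules out both identifications and spurious adjacencies between non-adjacent pairs in $U$, while condition (2) rules out identifications between adjacent pairs — its first alternative relies on abelianness of $\Delta/N$ to make any residual collision at a neighbour harmless, while its second, stronger alternative forbids such collisions outright, which is precisely what is needed to define the vertex-collapse well when $\Delta/N$ is nonabelian. Given this isomorphism of induced subgraphs, invoking the standard normal form theorem for graph products of Green applied to the finite target forces the image of $d$ to be a reduced nontrivial word in $(U^{\ast}/K)(\Delta/N)$, completing the separation.
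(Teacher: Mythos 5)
Your proposal is correct, and its skeleton coincides with the paper's: the necessity direction uses exactly the paper's witness elements ($[\delta_v,\delta'_v]$ for nonabelian $\Delta$, $[\delta_v,\delta_w]$ for non-adjacent pairs, $\delta_v\delta_w^{-1}$ for adjacent pairs when $\Delta$ is abelian), and your sufficiency argument is built on the same key construction as the paper's Lemma 3.4 --- a finite-index normal $K\leq\Gamma$ obtained by intersecting (normal cores of) the subgroups supplied by conditions (2) and (3), with normality used to propagate the conditions from chosen representatives to whole orbits, so that the quotient graph restricts to an isomorphism on the induced subgraph of the support --- together with the same reduction to the finitely many orbits meeting the support and the same well-definedness check for the vertex-collapse (abelianness of $\Delta$ versus the second alternative of (2)). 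Where you genuinely diverge is the endgame. You additionally quotient $\Delta$ by a finite-index normal $N$ avoiding the syllables of $d$, so that your target $(U^{\ast}/K)(\Delta/N)\rtimes\Gamma/K$ is already finite, and you conclude nontriviality via Green's normal form theorem: since the induced subgraph on the support maps isomorphically and every syllable survives in $\Delta/N$, the image word remains reduced with nontrivial syllables. The paper instead keeps $\Delta$ intact: it maps onto $(K\setminus G)(\Delta)\rtimes\Gamma/K$, notes that $(K\setminus G)(\Delta)$ is a graph product of a residually finite group over a finite graph and of finite index, hence the target is residually finite (via \cite{G90}), gets nontriviality of the image from the elementary retraction Lemma 2.2 rather than from normal forms, and only then composes with a further finite quotient. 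Your route buys an explicit finite quotient in one step at the cost of invoking the normal form theorem; the paper's route avoids normal-form machinery but leans on the residual finiteness of graph products. The one point you should spell out is that well-definedness of the collapse and the structural isomorphism are needed on all of $U^{\ast}$, not merely on $U$; this is precisely where normality of your $K$ (the normal core) does the work, just as in the paper's parenthetical remark in Lemma 3.4.
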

We will recall the definition of $G(\Delta) \rtimes \Gamma$ in the preliminaries.
 As we will prove in the preliminaries $\Gamma, \Delta \leq G(\Delta) \rtimes \Gamma$, so both $\Gamma, \Delta$ must be residually finite for $G(\Delta) \rtimes \Gamma$ to be residually finite. This generalises Gr\"unberg's theorem on the residual finiteness of regular wreath products \cite{G57}. \begin{theorem}
    A regular wreath product, $\Delta \wr \Gamma$ is residually finite if and only if both $\Delta$ and $\Gamma$ are residually finite, and $\Delta$ is abelian or $\Gamma$ is finite. 
\end{theorem}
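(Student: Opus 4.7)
The plan is to prove each direction of the biconditional separately.

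For the forward direction, both $\Gamma$ and $\Delta$ embed as subgroups of $\Delta \wr \Gamma$, so each must be residually finite. The substantive claim is that if $\Delta$ is non-abelian and $\Gamma$ is infinite, then $\Delta \wr \Gamma$ fails to be residually finite. Choose $a, b \in \Delta$ with $c = [a,b] \neq e$, and for $\gamma \in \Gamma$ write $a_\gamma, b_\gamma, c_\gamma$ for the elements of the base group $B = \bigoplus_\Gamma \Delta$ which equal $a, b, c$ at coordinate $\gamma$ and are trivial elsewhere. Assume toward contradiction that some finite-index subgroup of $\Delta \wr \Gamma$ avoids $c_e$; replacing it by its normal core we may take a finite-index normal subgroup $N$. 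Because $\gamma c_e \gamma^{-1} = c_\gamma$ and $N$ is normal, $c_\gamma \notin N$ for every $\gamma$. Since $\Gamma$ is infinite and $(\Delta \wr \Gamma)/N$ is finite, pigeonhole yields $\gamma_1 \neq \gamma_2$ with $a_{\gamma_1} N = a_{\gamma_2} N$ and $b_{\gamma_1} N = b_{\gamma_2} N$. Then
\[ c_{\gamma_1} N \;=\; [a_{\gamma_1}, b_{\gamma_1}] N \;=\; [a_{\gamma_1}, b_{\gamma_2}] N \;=\; N, \]
the last equality because $a_{\gamma_1}$ and $b_{\gamma_2}$ commute as elements of $B$ supported on disjoint coordinates. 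This contradicts $c_{\gamma_1} \notin N$.

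For the backward direction, split into the two permitted cases. If $\Gamma$ is finite, then $\Delta \wr \Gamma$ is an extension of $\Gamma$ by the finite direct product $\Delta^{|\Gamma|}$; the latter is residually finite, and a finite extension of a residually finite group is residually finite by a standard argument. Now suppose $\Delta$ is abelian and let $g = b h \neq e$ with $b \in B$, $h \in \Gamma$. If $h \neq e$, compose the projection $\Delta \wr \Gamma \to \Gamma$ with a finite quotient of $\Gamma$ detecting $h$. Otherwise $b \neq e$; fix $\gamma_0 \in \mathrm{supp}(b)$, a finite-index normal $S \leq \Delta$ with $b_{\gamma_0} \notin S$, and (using residual finiteness of $\Gamma$ and finiteness of $\mathrm{supp}(b)$) a finite-index normal $T \leq \Gamma$ so that distinct elements of $\mathrm{supp}(b)$ lie in distinct cosets of $T$. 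Because $\Delta$ is abelian, the formula
\[ \varphi(x)_{\tau T} \;=\; \prod_{\gamma \in \tau T} x_\gamma \pmod{S} \]
defines a homomorphism $\varphi \colon B \to (\Delta/S)^{\Gamma/T}$; one checks it is equivariant for the permutation action of $\Gamma$ on $(\Delta/S)^{\Gamma/T}$ pulled back along $\Gamma \to \Gamma/T$, so $\varphi$ extends to a homomorphism from $\Delta \wr \Gamma$ into the finite group $(\Delta/S)^{\Gamma/T} \rtimes \Gamma/T$. The choice of $T$ forces $\varphi(b)_{\gamma_0 T} = b_{\gamma_0} \bmod S \neq e$, so $g$ survives.

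The main obstacle is the abelian case of the backward direction: producing a finite-index normal subgroup of $\Delta \wr \Gamma$ missing a prescribed element of $B$ requires collapsing each $T$-coset of coordinates to a single copy of $\Delta/S$, and this coset-sum map is only a well-defined, $\Gamma$-equivariant homomorphism when $\Delta$ is abelian. The forward direction in contrast is essentially a single pigeonhole argument once one notices that normality of $N$ transports the obstruction $c_e$ uniformly across all coordinates.
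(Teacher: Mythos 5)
Your proof is correct. One thing to note at the outset: the paper never proves Theorem 1.2 directly --- it cites Gr\"unberg and later recovers the statement as the complete-graph special case of Theorem 1.1 (via Corollaries 3.4.1 and 3.4.2, since for the regular action vertex stabilisers are trivial and condition 1.1(3) is vacuous). Your direct argument is essentially the specialisation of the paper's general machinery, organised a little differently. For the forward direction, the paper's Theorem 3.1 kills $[g_{(v)},h_{(v)}]$ in a finite quotient by conjugating by an explicit element $k \in M \cap \Gamma$ that moves $v$ to a neighbour in its own $K$-orbit (such $k$ exists for the complete graph because $M \cap \Gamma$ is infinite when $\Gamma$ is); your pigeonhole on the pairs $(a_\gamma N, b_\gamma N)$ achieves the same coordinate collapse without having to locate such a $k$, at the cost of using normality of $N$ to spread $c_e \notin N$ to all $c_\gamma$. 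For the backward direction, your coset-sum map $B \to (\Delta/S)^{\Gamma/T}$, extended to $\Delta \wr \Gamma \to (\Delta/S)^{\Gamma/T} \rtimes \Gamma/T$, is exactly the complete-graph instance of the paper's quotient-graph homomorphism $G(\Delta)\rtimes\Gamma \to (K\setminus G)(\Delta)\rtimes \Gamma/K$ (whose well-definedness likewise hinges on $\Delta$ being abelian when two adjacent vertices fall in one $K$-orbit), except that you additionally quotient $\Delta$ by $S$ so the target is finite outright; this buys you a self-contained proof that does not invoke residual finiteness of graph products, which the paper uses at the corresponding step, while the paper's route buys the much more general Theorem 1.1. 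Your separate treatment of the finite-$\Gamma$ case by finite extension of $\Delta^{|\Gamma|}$ is also fine and replaces the paper's appeal to condition 1.1(2) with trivial stabiliser orbits.
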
 The techniques needed to prove Theorem 1.1 are largely generalisations of those used to prove Theorem 1.2. Theorem 1.1 also generalises Proposition 3.2 in \cite{C14} which states: \begin{theorem}
    A wreath product $\Delta \wr_X \Gamma$ is residually finite if and only if $\Delta, \Gamma$ are residually finite and either: \begin{enumerate}
        \item $\Delta$ is abelian and all vertex stabilisers are profinitely closed in $\Gamma$, or
        \item all vertex stabilisers have finite index in $\Gamma$.
    \end{enumerate}
\end{theorem}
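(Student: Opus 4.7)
The plan is to analyse, for each non-identity element $(f, \gamma) \in \Delta \wr_X \Gamma$ (with $f \colon X \to \Delta$ finitely supported), whether it admits a separating finite quotient.

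For necessity, the natural embeddings $\Gamma, \Delta \hookrightarrow \Delta \wr_X \Gamma$ force residual finiteness of both. I next argue that every $\Gamma_v$ must be profinitely closed: if $\gamma \in \overline{\Gamma_v} \setminus \Gamma_v$, the commutator $e := [\delta_v^\alpha, \gamma] = \delta_v^\alpha \delta_{\gamma v}^{-\alpha}$ (where $\delta_v^\alpha$ denotes the function taking value $\alpha \neq 1$ at $v$ and identity elsewhere) is non-trivial since $\gamma v \neq v$; but in any finite quotient $\phi$ the set $C' := \{\sigma \in \Gamma : \phi(\sigma) \text{ centralises } \phi(\delta_v^\alpha)\}$ is a finite-index subgroup of $\Gamma$ containing $\Gamma_v$, hence contains $\overline{\Gamma_v}$, so $\phi(\gamma)$ centralises $\phi(\delta_v^\alpha)$ and $\phi(e) = 1$. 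For the dichotomy, if $\Delta$ is non-abelian and some $\Gamma_v$ has infinite index, I would produce an obstructing element using non-commuting $\alpha, \beta \in \Delta$ via a pigeonhole argument in the spirit of Gr\"unberg's theorem; this is the subtlest step.

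For sufficiency in case (2), each orbit is finite. Given $(f, \gamma) \ne 1$, I take a finite-index normal $N \triangleleft \Gamma$ contained in the intersection of stabilisers of the (finite) set $S := \Gamma \cdot \mathrm{supp}(f)$, with $\gamma \notin N$ if $\gamma \neq 1$, and a finite-index normal $M \triangleleft \Delta$ separating the values $f(v_i)$ from the identity; the natural map to the finite group $(\Delta/M)^S \rtimes (\Gamma/N)$ is then a well-defined homomorphism separating $(f, \gamma)$.

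For sufficiency in case (1), I exploit the $\Gamma$-module structure on $\bigoplus_X \Delta$. Writing $f = \sum_{i=1}^k \delta_{v_i}^{\alpha_i}$ for distinct $v_i$, pick a finite quotient $\pi \colon \Delta \to D$ with $\pi(\alpha_1) \ne 0$; for each $v_j \in \Gamma v_1 \setminus \{v_1\}$ write $v_j = \gamma_j v_1$ with $\gamma_j \notin \Gamma_{v_1}$, and use profinite closedness of $\Gamma_{v_1}$ to find a finite-index normal $N_j \triangleleft \Gamma$ with $\gamma_j \notin N_j \Gamma_{v_1}$. Setting $N := \bigcap_j N_j$ ensures $Nv_1 \cap \{v_2, \ldots, v_k\} = \emptyset$; then restricting to the finite $\Gamma/N$-invariant set $S \subset X/N$ obtained as the $\Gamma/N$-orbit of $\{Nv_1, \ldots, Nv_k\}$ yields a finite quotient $D^S \rtimes (\Gamma/N)$ on which $f$ survives with image $\pi(\alpha_1) \ne 0$ at $Nv_1$. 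The main obstacle will be the non-abelian necessity argument: a purely centraliser-based argument such as the above does not suffice when $\Gamma_v$ is profinitely closed but has infinite index, and one must exploit non-commutativity in $\Delta$ directly.
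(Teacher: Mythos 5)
Your route is genuinely different from the paper's: the paper deduces this statement from its Theorem 1.1 by viewing $\Delta \wr_X \Gamma$ as the graph wreath product over the \emph{complete} graph on $X$, where condition 1.1(3) becomes vacuous and condition 1.1(2) specialises exactly to the dichotomy ``$\Delta$ abelian and stabilisers profinitely closed'' versus ``stabilisers of finite index''; you instead argue directly on the wreath product. Your direct arguments are essentially correct where you carry them out: the centraliser argument shows profinite closedness of every $\Gamma_v$ is necessary (consistently with the theorem, since finite-index subgroups are closed); in case (2) the orbit of the support is finite and the map to $(\Delta/M)^S \rtimes (\Gamma/N)$ separates $(f,\gamma)$; and in case (1) choosing $N$ with $Nv_1 \cap \{v_2,\dots,v_k\} = \emptyset$ and summing over $N$-orbits (which uses commutativity of $\Delta$) isolates $\pi(\alpha_1)$ at the coordinate $Nv_1$, the case $f=1$, $\gamma \neq 1$ being handled by projecting to $\Gamma$.

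There is, however, a genuine gap: you never prove the remaining half of necessity, namely that if $\Delta$ is non-abelian then every stabiliser must have finite index; you only announce ``a pigeonhole argument in the spirit of Gr\"unberg'' and defer it as the main obstacle. As written, the forward direction is therefore incomplete. The missing step is in fact no harder than your centraliser argument, and is exactly the paper's Theorem 3.1 specialised to the complete graph: choose non-commuting $\alpha, \beta \in \Delta$ and set $c = [\alpha_v, \beta_v] \neq 1$, where $\alpha_v, \beta_v$ are supported at $v$. For any finite-index normal subgroup $M$ of $\Delta \wr_X \Gamma$, the subgroup $K = M \cap \Gamma$ has finite index in $\Gamma$; since $[\Gamma : \Gamma_v] = \infty$, $K \not\subseteq \Gamma_v$, so some $k \in K$ satisfies $kv \neq v$. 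Modulo $M$ we have $\beta_v \equiv k\beta_v k^{-1} = \beta_{kv}$, and $\beta_{kv}$ commutes with $\alpha_v$ because $kv \neq v$, whence $c \in M$; as this holds for every such $M$, the group is not residually finite. No pigeonhole is needed, only the observation that a finite-index subgroup cannot fix $v$ when $\Gamma_v$ has infinite index; with this paragraph inserted, your proof is complete.
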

As a further special case of Theorem 1.1, we may deduce the known result that the free product of residually finite groups is residually finite. If we take $G$ to be the graph with vertices $\Gamma$ and no edges, then $G(\Delta) \rtimes \Gamma \cong \Delta *\Gamma$. As $N(\gamma) = \emptyset$ for all vertices $\gamma$ of $G$, condition 1.1(2) is trivial and condition 1.1(3) is equivalent to $\Gamma$ being residually finite, so the result follows.\\
To conclude the introduction, we give three examples (one of a residually finite group, and two of non-residually finite groups) which follow easily from Theorem 1.1, but do not necessarily follow quickly from the residual finiteness of graph products \cite{G90}, or Theorems 1.2, 1.3.
\begin{example}
    Let the graph $G$ have vertices corresponding to $\mathbb Z$ and edges of the form $(i, i+1)$ for all $i \in \mathbb Z$ \[\begin{tikzcd}[ampersand replacement=\&]
	\dots \& -1 \& 0 \& 1 \& \dots
	\arrow[no head, from=1-1, to=1-2]
	\arrow[no head, from=1-2, to=1-3]
	\arrow[no head, from=1-3, to=1-4]
	\arrow[no head, from=1-4, to=1-5]
\end{tikzcd}\] and let $\Gamma = \mathbb Z$ act on $G$ by translation, and let $\Delta$ be any non-trivial residually finite group. Then condition 1.1(1) is met as $\mathbb Z$ is residually finite, condition 1.1(2) is met with $K = 2\mathbb Z$ (in both the case where $\Delta$ is abelian and where it is not) and given vertices $n, m \in G$, 1.1(3) is met by taking $K = k\mathbb Z$ for $k > |n - m| + 1$. Therefore $G(\Delta) \rtimes \mathbb Z$ is residually finite.
\end{example}
\begin{example}
    Consider now the case of $\Delta = S_3$, $\Gamma = \mathbb Z$ and $G$ the graph with vertices corresponding to $\mathbb Z$, and edges of the form $(i, i + n!)$ for $i\in \mathbb Z, n \in \mathbb N$. We claim $G(S_3)\rtimes \mathbb Z$ is not residually finite. Consider condition 1.1(2) and take $v = 0$. As $\Delta$ is non-abelian, if $G(S_3) \rtimes \mathbb Z$ was residually finite, then there would be some finite index subgroup $K \leq \mathbb Z$ containing no element of the form $n!$ for $n \in \mathbb N$. It is well known that $K$ must be of the form $m\mathbb Z$ for some $m \in \mathbb Z$, but then $m! \in K$, contradiction, so $G(S_3) \rtimes \mathbb Z$ is not residually finite.
\end{example}
\begin{example}
    Finally, consider the case of $\Delta$ non-trivial but residually finite, $\Gamma = \mathbb Z$, and $G$ the graph on $\mathbb Z$ with edges $(i, i+1 + n!)$ for $i \in \mathbb Z$ and $n \in \mathbb N$. Then $4\mathbb Z \cap N(0) = \emptyset$ so 1.1(2) is satisfied, but $1 \notin N(0)$, whereas by the same argument as Example 1.2, $K + 1 \cap N(0) \neq \emptyset$ for all finite index $K \leq \mathbb Z$, so by condition 1.1(3), $G(\Delta) \rtimes \Gamma$ is not residually finite.
\end{example}

\section{Preliminaries}
All graphs in this paper will be simplicial graphs (ie all edges are between two distinct vertices, and an edge is uniquely determined by its vertices). For a given vertex $v$ in the vertex set, we denote its set of neighbours by $N(v)$.\\
Throughout the remainder of the paper, $\Gamma$ and $\Delta$ will be groups and $G$ will be a graph on which $\Gamma$ acts (on the left) via graph automorphisms with vertex set $V$ and edge set $E$.\\ We denote by $[g, h]$ the commutator of $g$ and $h$ which we will take to be $ghg^{-1}h^{-1}$.
\begin{definition}
    Given an action of $\Gamma \acts G = (V, E)$ as above and a subgroup $K \leq \Gamma$, we define the \textit{quotient graph} of $G$ by $K$, denoted $K\setminus G$, to be the graph with vertices the orbits of $V$ under $K$ such that there is an edge between $Kv, Kw \in K \setminus G$ if there are some $k, k' \in K$ such that $(kv, k'w) \in E$.
\end{definition}
\begin{definition}
    The \textit{graph product} of $\Delta$ over $G$, denoted $G(\Delta)$, is defined as $$G(\Delta) = \frac{\left(\displaystyle\mathop\Asterisk_{v \in V} \Delta_{(v)}\right)}{\langle\langle [\Delta_{(v)}, \Delta_{(w)}] \mid (v, w) \in E\rangle\rangle}$$ where the $\Delta_{(v)}$ for $v \in V$ are $V$-many isomorphic copies of $\Delta$. We call the relations of the form $[\Delta_{(v)}, \Delta_{(w)}]$ the \textit{commutativity relations}. For a given element $w \in G(\Delta)$, we define a \textit{support}, denoted $\operatorname{supp} (w)$ of $w$ to be a finite subset $W \subseteq V$ such that $w$ lies in the subgroup of $G(\Delta)$ generated by all the $\Delta_{(v)}$ for $v \in W$. In the sequel we denote this $\operatorname{supp}(w)$.
\end{definition}
\begin{remark}
    It is proven in \cite{G90} that there is a unique minimal support for any $w \in G(\Delta)$, although we will not need that fact in this paper. For our purposes, it only matters that supports exist.
\end{remark}
Now as $\Gamma \acts G$, we may define an action $\Gamma \acts G(\Delta)$ by $\gamma \cdot g_{(v)} = g_{\gamma \cdot v}$.
\begin{lemma}
    The above action is a well defined action of $\Gamma$ on $G(\Delta)$ by group automorphisms.
\end{lemma}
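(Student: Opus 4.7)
The plan is to construct the action in two stages: first on the free product, then descend to the graph product quotient, and finally verify the action axioms and that each $\gamma$ acts by an automorphism.

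First, I would fix $\gamma \in \Gamma$ and use the universal property of the free product $F := \Asterisk_{v \in V}\Delta_{(v)}$. For each $v \in V$, the composition $\Delta_{(v)} \xrightarrow{\sim} \Delta_{(\gamma v)} \hookrightarrow F$ (sending $g_{(v)}$ to $g_{(\gamma v)}$) is a homomorphism, so the universal property yields a unique homomorphism $\widetilde{\gamma} \colon F \to F$ with $\widetilde{\gamma}(g_{(v)}) = g_{(\gamma v)}$ on generators.

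Second, I would show $\widetilde{\gamma}$ descends to $G(\Delta)$, which is really the only non-routine step. It suffices to show $\widetilde{\gamma}$ sends the normal closure $N := \langle\langle [\Delta_{(v)}, \Delta_{(w)}] \mid (v,w) \in E\rangle\rangle$ into itself. Because $N$ is normally generated by commutators of the form $[a_{(v)}, b_{(w)}]$ with $(v,w) \in E$, and $\widetilde{\gamma}$ is a homomorphism, it is enough to verify $\widetilde{\gamma}[a_{(v)}, b_{(w)}] = [a_{(\gamma v)}, b_{(\gamma w)}]$ lies in $N$. Since $\Gamma$ acts on $G$ by graph automorphisms, $(v,w) \in E$ implies $(\gamma v, \gamma w) \in E$, so this commutator is a defining relator of $N$. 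Hence $\widetilde{\gamma}$ induces a well-defined homomorphism $\gamma_* \colon G(\Delta) \to G(\Delta)$.

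Third, I would verify the action axioms $e_* = \mathrm{id}$ and $(\gamma\delta)_* = \gamma_* \circ \delta_*$. Both sides are homomorphisms of $G(\Delta)$, so by the universal property it suffices to check agreement on the generators $g_{(v)}$, which reduces to the fact that $\Gamma \acts V$ is a group action. Finally, each $\gamma_*$ is a group automorphism because $\gamma_* \circ (\gamma^{-1})_* = (e)_* = \mathrm{id}$, giving a two-sided inverse.

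The main obstacle is the descent step, but it reduces cleanly to the observation that graph automorphisms preserve the edge set; everything else is a routine application of the universal property of free products together with the normal-closure description of $G(\Delta)$.
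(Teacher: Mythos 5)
Your proposal is correct and follows essentially the same route as the paper: define the map on the free product via its universal property, descend to $G(\Delta)$ using the fact that $\Gamma$ acts by graph automorphisms (so the commutator relators are permuted among themselves), and check the action axioms on generators. Your write-up is simply a more detailed version of the paper's argument.
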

\begin{proof}
    The action of $\Gamma$ on the $V$-fold free product of $\Delta$ given by $\gamma \cdot \Delta_{(v)} = \Delta_{(\gamma \cdot v)}$ is well defined by the universal property of the free product, so we get a group automorphism of $\Gamma$ into  maps from the $V$-fold free product to $G(\Delta)$, and this descends to a map into $\operatorname{Aut}(G(\Delta))$ as the fact that $\Gamma$ acts by graph automorphisms ensures that $\gamma \cdot [\Delta_{(v)}, \Delta_{(w)}] = e$ for $(v, w) \in E$.
\end{proof}
We may therefore define a semidirect product.
\begin{definition}
    The \textit{graph wreath product} of $\Delta$ and $\Gamma$ over $G$ is the semidirect product $G(\Delta) \rtimes \Gamma$ given by the action of Lemma 2.1. 
\end{definition}
\begin{remark}
    We briefly indicate how to prove our claim from the introduction that free products are graph wreath products on the empty graph. Suppose we have groups $\Delta$ and $\Gamma$, and we let $G$ be the graph on $\Gamma$ with no edges. Let $\Gamma \acts G$ via left translation. Then $G(\Delta) \rtimes \Gamma$ is the free product of $\Gamma$-many copies of $\Delta$, semidirect product with $\Gamma$. It can be shown that the map sending the $\Delta_{(\gamma)}$ component to $\gamma \Delta \gamma^{-1}$ in $\Gamma * \Delta$ (and sending $\Gamma$ to itself via the identity) is an isomorphism (recalling that the set of all $\gamma \Delta \gamma^{-1}$ with $\gamma \in \Gamma$ generates their free product), and this proves the claim.
\end{remark}
From the definition, it is clear that $\Gamma \leq G(\Delta) \rtimes \Gamma$, and so if $G(\Delta) \rtimes \Gamma$ is residually finite, then $\Gamma$ is. We also similarly get that $G(\Delta)$ is residually finite, but it is not immediate that this implies $\Delta$ is also residually finite. We shall show that, given an induced subgraph $H \subseteq G$, the natural homomorphism $\iota \colon H(\Delta) \rightarrow G(\Delta)$ induced by the identity mappings sending the $\Delta_{(h)}$ component in $H(\Delta)$ to the $\Delta_{(h)}$ component in $G(\Delta)$ is injective. The following lemma is needed to prove so.
\begin{lemma}
    Suppose $H \leq G$ is an induced subgraph. Then $H(\Delta)$ is isomorphic to the subgroup of $G(\Delta)$ generated by the $\Delta$-copies corresponding to the vertices of $H$ via the homomorphism $\iota$ defined above.
\end{lemma}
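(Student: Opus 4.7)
The plan is to construct a retraction $r \colon G(\Delta) \to H(\Delta)$ with $r \circ \iota = \operatorname{id}_{H(\Delta)}$; this immediately forces $\iota$ to be injective, and once $\iota$ is injective it is by construction a surjection onto the subgroup of $G(\Delta)$ generated by the $\Delta_{(h)}$ with $h \in H$, giving the desired isomorphism.

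To build $r$, I would use the universal property of the graph product $G(\Delta)$. For each vertex $v$ of $G$, define a homomorphism $\func{r_v}{\Delta_{(v)}}{H(\Delta)}$ by sending $\Delta_{(v)}$ identically onto its copy in $H(\Delta)$ when $v \in H$, and by sending $\Delta_{(v)}$ to the trivial subgroup when $v \notin H$. By the universal property of the free product, these assemble into a homomorphism $\mathop\Asterisk_{v \in V}\Delta_{(v)} \to H(\Delta)$, and the main thing to check is that it descends to $G(\Delta)$, i.e.\ kills each commutativity relation $[\Delta_{(v)}, \Delta_{(w)}]$ with $(v, w) \in E$.

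Checking this splits into two cases. If at least one of $v, w$ lies outside $H$, then one of the two factors is sent to the identity in $H(\Delta)$, so the commutator vanishes automatically. If both $v, w$ lie in $H$, then because $H$ is an \emph{induced} subgraph the edge $(v, w)$ still lies in the edge set of $H$, so the commutativity relation $[\Delta_{(v)}, \Delta_{(w)}] = e$ already holds inside $H(\Delta)$. Hence the map descends to a well-defined homomorphism $\func{r}{G(\Delta)}{H(\Delta)}$. The hypothesis that $H$ is induced (as opposed to merely a subgraph) is exactly what is needed here, and this is the only nontrivial step; without it the second case could fail.

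Finally, $r \circ \iota$ fixes each generator $\Delta_{(h)}$ of $H(\Delta)$ with $h \in H$, so $r \circ \iota = \operatorname{id}_{H(\Delta)}$. Thus $\iota$ has a left inverse and is injective, and identifies $H(\Delta)$ with the subgroup of $G(\Delta)$ generated by $\{\Delta_{(h)} : h \in H\}$, completing the proof. As an immediate corollary (taking $H$ to consist of a single vertex) we recover $\Delta \leq G(\Delta)$, which gives the needed residual finiteness of $\Delta$ as a necessary condition in Theorem 1.1.
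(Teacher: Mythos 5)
Your proposal is correct and matches the paper's own argument: the paper likewise defines the retraction $\psi \colon G(\Delta) \to H(\Delta)$ killing the $\Delta_{(v)}$ with $v \notin H$ and fixing the rest, notes it respects the commutativity relations (using that $H$ is induced), and concludes $\psi \circ \iota = \operatorname{id}$, hence $\iota$ is injective. Your case analysis just spells out the well-definedness check in slightly more detail.
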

\begin{proof}
    We have a homomorphism $\iota \colon H(\Delta) \rightarrow G(\Delta)$ via $\Delta_{(v)} \rightarrow \Delta_{(v)}$, as the commutativity relations between $H$-copies of $\Delta$ are the same in $G$ and $H$. We define a homomorphism $\func{\psi}{G(\Delta)}{H(\Delta)}$ by sending $\Delta_{(v)} \rightarrow e$ if $v \notin H$, and $\Delta_{(v)} \rightarrow \Delta_{(v)}$ otherwise. This is well defined for the same reasons as above, as the identity commutes with $\Delta_{(v)}$ for all $v$, and by definition the composite $\psi \circ \iota$ maps $\Delta_{(v)} \rightarrow \Delta_{(v)}$ for $v \in H$, so it is the identity. As it is injective, $\iota$ must be injective, so we are done.
\end{proof}
As an immediate corollary, $\Delta \leq G(\Delta)$, so that $\Delta$ must be residually finite if $G(\Delta) \rtimes \Gamma$ is.

\section{General Graph Wreath Products}
Under the same setup as in Section 2, we begin by constructing three necessary conditions for residual finiteness of graph wreath products.
\begin{theorem}
    If $\Delta$ is non-abelian and there is some $v \in V$ such that for every finite index subgroup $K \leq \Gamma$, there is some $w \in N(v)$ such that $w \in Kv$, then $G(\Delta) \rtimes \Gamma$ is not residually finite.
\end{theorem}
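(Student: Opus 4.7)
The plan is to exhibit a single nontrivial element of $G(\Delta) \rtimes \Gamma$ that is killed by every homomorphism to a finite group, which is exactly the obstruction to residual finiteness. Since $\Delta$ is non-abelian I will fix $a,b \in \Delta$ with $[a,b] \neq e$ and take my candidate element to be $z = [a_{(v)}, b_{(v)}]$, viewed inside $\Delta_{(v)} \subseteq G(\Delta) \subseteq G(\Delta) \rtimes \Gamma$. That $z \neq e$ follows immediately from Lemma 2.2, which identifies $\Delta_{(v)}$ with a copy of $\Delta$ inside $G(\Delta)$.

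Now suppose $\phi \colon G(\Delta) \rtimes \Gamma \to Q$ is any homomorphism to a finite group $Q$. I would like to show $\phi(z) = e$. To do this I set $K = \ker(\phi|_\Gamma)$, which has finite index in $\Gamma$. The hypothesis then supplies some $w \in N(v)$ and some $k \in K$ with $w = kv$. Inside the semidirect product, the action of $\Gamma$ on $G(\Delta)$ gives $k\, b_{(v)}\, k^{-1} = b_{(kv)} = b_{(w)}$, and because $\phi(k) = e$ this implies $\phi(b_{(v)}) = \phi(b_{(w)})$. Consequently
\[
\phi(z) \;=\; \phi\!\left(a_{(v)}\, b_{(v)}\, a_{(v)}^{-1}\, b_{(v)}^{-1}\right) \;=\; \phi\!\left([a_{(v)},\, b_{(w)}]\right).
\]
Since $(v,w) \in E$, the commutativity relations of the graph product force $[a_{(v)}, b_{(w)}] = e$ in $G(\Delta)$, so $\phi(z) = e$.

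Because this argument applies uniformly to every finite quotient $\phi$, the nontrivial element $z$ lies in every finite-index normal subgroup of $G(\Delta) \rtimes \Gamma$, contradicting residual finiteness. The substantive content is really just the commutator trick: one trades the nontrivial commutator at $v$ for a trivial commutator between $v$ and a neighbour $w$, using the fact that conjugation by any $\phi$-trivial element of $\Gamma$ preserves $\phi$-images. The main thing to be careful about is invoking Lemma 2.2 (to guarantee $z \neq e$) and distinguishing elements of $\Gamma$ from their images in the finite quotient $Q$; beyond that the argument is essentially one line.
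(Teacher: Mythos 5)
Your proof is correct and follows essentially the same argument as the paper: both take a noncommuting pair in $\Delta$, place it at the vertex $v$, and use conjugation by an element $k$ of the finite-index subgroup (your $\ker(\phi|_\Gamma)$, the paper's $M \cap \Gamma$) carrying $v$ to a neighbour $w$ to show the nontrivial commutator dies in every finite quotient. The only cosmetic difference is that you phrase it via an arbitrary homomorphism to a finite group rather than a finite-index normal subgroup, which is equivalent.
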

\begin{proof}
    Let $g, h \in \Delta$ not commute and consider $[g_{(v)}, h_{(v)}]$. This is not the identity as $\Delta$ is non-abelian, but for any finite index normal subgroup $M \leq G(\Delta)\rtimes \Gamma$, set $K = M \cap \Gamma$. If we work in $(G(\Delta)\rtimes \Gamma)/M$ then, setting $k \in K$ to be such that $w = kv$ $$[g_{(v)}, h_{(v)}] = [g_{(v)}, kh_{(v)}k^{-1}] = [g_{(v)}, h_{(kv)}] = e$$ so $[g_{(v)}, h_{(v)}]$ is in all finite index normal subgroups of $G(\Delta) \rtimes \Gamma$, the latter is not residually finite.
\end{proof}
Stating the contrapositive explicitly,
\begin{corollary}
    If $G(\Delta) \rtimes \Gamma$ is residually finite, then $\Delta$ is abelian or for all $v \in V$ there is a finite index subgroup $K \leq \Gamma$ such that $Kv \cap N(v) = \emptyset$.
\end{corollary}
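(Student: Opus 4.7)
The plan is straightforward: this corollary is literally the contrapositive of Theorem 3.1, which the paper has just proved, so my proof would consist of negating each side of that theorem and checking that the resulting statement is exactly what the corollary claims. No fresh group-theoretic content is required.

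In detail, Theorem 3.1 has the form $(A \wedge B) \Rightarrow \neg C$, where $A$ is the statement ``$\Delta$ is non-abelian'', $B$ is ``there exists $v \in V$ such that for every finite-index $K \leq \Gamma$, $N(v) \cap Kv \neq \emptyset$'', and $C$ is ``$G(\Delta) \rtimes \Gamma$ is residually finite''. The contrapositive $C \Rightarrow (\neg A \vee \neg B)$ reads: if $G(\Delta) \rtimes \Gamma$ is residually finite, then either $\Delta$ is abelian, or applying De Morgan to $B$, for every $v \in V$ there exists a finite-index subgroup $K \leq \Gamma$ with $N(v) \cap Kv = \emptyset$. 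This is precisely the conclusion of the corollary.

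The main (non-)obstacle here is simply the quantifier flip in negating $B$ from $\exists v \, \forall K$ to $\forall v \, \exists K$; as an alternation of only two quantifiers with a negated atomic predicate inside, it is routine and I do not anticipate any difficulty. All the substantive work -- exhibiting a non-trivial commutator $[g_{(v)}, h_{(v)}]$ with $g, h \in \Delta$ non-commuting and showing it is killed in every finite quotient whenever some $w \in N(v)$ lies in $Kv$ for $K = M \cap \Gamma$ -- has already been carried out in the proof of Theorem 3.1, so no further argument is needed.
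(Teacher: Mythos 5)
Your proposal is correct and matches the paper exactly: the paper introduces this corollary with ``Stating the contrapositive explicitly,'' and gives no further argument, relying on Theorem 3.1 just as you do. Your careful treatment of the quantifier flip in negating the hypothesis is the only content needed, and it is handled correctly.
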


\begin{theorem}
    Suppose there are $v, w \in V$ which are not neighbours and not equal, and for every finite index $K \leq \Gamma$ there is some $k \in K$ such that $kw$ neighbours or is $v$. Then $H = G(\Delta) \rtimes \Gamma$ is not residually finite.
\end{theorem}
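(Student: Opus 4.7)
The plan is to mimic the strategy of Theorem 3.1: produce a single non-identity element of $H$ that is forced into every finite-index normal subgroup, which by definition obstructs residual finiteness. The natural candidate is $x = [g_{(v)}, g_{(w)}]$ for some non-trivial $g \in \Delta$ (the statement is implicitly about non-trivial $\Delta$, since otherwise $H \cong \Gamma$ and residual finiteness is governed purely by $\Gamma$).

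First I would verify that $x \neq e$ in $G(\Delta)$. Since $v \neq w$ and $w \notin N(v)$, the induced subgraph on $\{v,w\}$ has no edges, so by Lemma 2.2 the subgroup $\langle \Delta_{(v)}, \Delta_{(w)}\rangle \leq G(\Delta)$ is isomorphic to the free product $\Delta * \Delta$, in which $[g_{(v)}, g_{(w)}]$ is manifestly non-trivial for $g \neq e$.

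Next, let $M$ be any finite-index normal subgroup of $H$ and set $K = M \cap \Gamma$, which has finite index in $\Gamma$. By hypothesis there exists $k \in K$ with $kw = v$ or $kw \in N(v)$. Working modulo $M$ we have $k \equiv e$, so $g_{(kw)} = k g_{(w)} k^{-1} \equiv g_{(w)}$, and hence $x \equiv [g_{(v)}, g_{(kw)}]$. In the first case this is a self-commutator and equals $e$; in the second, it vanishes by the commutativity relation between $\Delta_{(v)}$ and $\Delta_{(kw)}$. Either way $x \in M$, which since $x \neq e$ shows $H$ is not residually finite.

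The key observation driving the argument is that the disjunction in the hypothesis, ``$kw$ is $v$ or neighbours $v$,'' corresponds exactly to the two reasons a commutator $[g_{(a)},g_{(b)}]$ can vanish in a graph product --- coincidence of vertices or adjacency of vertices --- so no further case analysis is required. I do not foresee a serious obstacle; the only point needing care is the appeal to Lemma 2.2 to certify that $x$ is genuinely non-trivial, which is exactly where the hypothesis that $v,w$ are neither equal nor adjacent is used.
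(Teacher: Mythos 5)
Your proposal is correct and follows essentially the same argument as the paper: the paper also takes $[g_{(v)}, g_{(w)}]$ for $g \neq e$, notes it is non-trivial via Lemma 2.2, and kills it in every finite quotient by conjugating $g_{(w)}$ by the $k \in K = M \cap \Gamma$ supplied by the hypothesis. Your explicit split into the two vanishing cases (coincidence versus adjacency of $v$ and $kw$) is just a slightly more detailed rendering of the paper's one-line computation.
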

\begin{proof}
    Take $g \in \Delta$ and consider $[g_{(v)}, g_{(w)}]$. This is non-trivial in $H$ as it is non-trivial in $G(\Delta)$ by Lemma 2.2. Suppose $M \leq H$ is a finite index normal subgroup, let $K = M \cap \Gamma$, and take $k \in K$ as in the hypothesis. Then in $H/M$ $$[g_{(v)}, g_{(w)}] = [g_{(v)}, kg_{(w)}k^{-1}] = [g_{(v)}, g_{(kw)}] = e$$ so $[g_{(v)}, g_{(w)}]$ is trivial in every finite quotient, so $H$ is not residually finite.
\end{proof} 
Stating the contrapositive explicitly,
\begin{corollary}
    If $G(\Delta) \rtimes \Gamma$ is residually finite, then for all $v, w \in V$ not neighbouring there is a finite index $K \leq \Gamma$ such that $Kw \cap (N(v) \cup \{v\}) = \emptyset$.
\end{corollary}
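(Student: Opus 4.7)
The statement is essentially the contrapositive of Theorem 3.2, so my plan is to deduce it immediately by that route. Concretely, I will assume $G(\Delta) \rtimes \Gamma$ is residually finite and suppose for contradiction that the claimed conclusion fails: there exist non-neighbouring $v, w \in V$ such that for every finite-index subgroup $K \leq \Gamma$ we have $Kw \cap (N(v) \cup \{v\}) \neq \emptyset$. Unwrapping the set-theoretic notation, this says every finite-index $K$ contains some $k$ with $kw$ either equal to $v$ or a neighbour of $v$, which is precisely the hypothesis of Theorem 3.2; that theorem then produces a non-trivial element of $G(\Delta) \rtimes \Gamma$ which survives in every finite quotient, contradicting residual finiteness.

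The one subtlety worth flagging is that Theorem 3.2 requires $v \neq w$, while the corollary as stated only says ``not neighbouring''. The case $v = w$ would make the stated conclusion vacuously unattainable, since $e \in K$ forces $v \in Kv \cap \{v\}$ regardless of $K$; the corollary must therefore be read with the implicit ``not equal'' qualifier already appearing in Theorem 1.1(3). Once this reading is fixed, no substantive obstacle remains — the argument is entirely formal, merely restating Theorem 3.2 in its contrapositive form, and introduces no new ideas.
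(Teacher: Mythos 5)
Your proposal is correct and matches the paper exactly: the paper's proof of this corollary is simply ``stating the contrapositive explicitly'' of Theorem 3.2, which is what you do. Your observation that the implicit ``not equal'' qualifier (present in Theorem 3.2 and in condition 1.1(3)) must be read into the corollary's statement is a fair and accurate clarification, but it does not change the argument.
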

\begin{theorem}
    If there are $v, w \in V$ are distinct such that for all finite index normal $K \leq \Gamma$, $w \in Kv$, then $G(\Delta) \rtimes \Gamma$ is not residually finite.
\end{theorem}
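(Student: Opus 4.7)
The plan is to mimic the strategy of Theorems 3.1 and 3.2 by exhibiting a single non-identity element that lies in every finite-index normal subgroup of $H = G(\Delta) \rtimes \Gamma$. Because the hypothesis gives us no information on whether $v$ and $w$ are adjacent, the commutator witnesses used in those two theorems are no longer available. Instead, fixing a non-identity $g \in \Delta$ (which we may assume exists, as the statement is vacuous when $\Delta$ is trivial since then $H \cong \Gamma$), I would consider the element
$$x = g_{(v)} g_{(w)}^{-1} \in H.$$

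The first step is to verify that $x$ is non-trivial in $G(\Delta)$, and hence in $H$. By Lemma 2.2 applied to the induced subgraph on $\{v, w\}$, the subgroup of $G(\Delta)$ generated by $\Delta_{(v)}$ and $\Delta_{(w)}$ is isomorphic to either $\Delta * \Delta$ or $\Delta \times \Delta$, depending on whether $v$ and $w$ are adjacent. In both cases $g_{(v)} g_{(w)}^{-1}$ is visibly non-trivial whenever $g \neq e$ and $v \neq w$.

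Next, let $M \leq H$ be an arbitrary finite-index normal subgroup and set $K = M \cap \Gamma$. Then $K$ has finite index in $\Gamma$, and $K$ is normal in $\Gamma$ because $\gamma K \gamma^{-1} \subseteq \gamma M \gamma^{-1} \cap \Gamma = M \cap \Gamma = K$ for any $\gamma \in \Gamma$. Applying the hypothesis to this $K$ yields some $k \in K \subseteq M$ with $kv = w$. Working modulo $M$, since $k$ becomes trivial,
$$g_{(w)} = g_{(kv)} = k g_{(v)} k^{-1} = g_{(v)},$$
and therefore $x = e$ in $H/M$. Hence $x$ is a non-trivial element of $H$ that lies in every finite-index normal subgroup, so $H$ is not residually finite.

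The only genuine obstacle is the non-triviality check in the first step, which rests on Lemma 2.2; the rest of the argument is routine bookkeeping directly parallel to the proofs of Theorems 3.1 and 3.2, the only novelty being the choice of witness $g_{(v)} g_{(w)}^{-1}$ in place of a commutator.
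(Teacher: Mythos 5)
Your proposal is correct and follows essentially the same route as the paper: the paper also takes the witness $g_{(v)}g_{(w)}^{-1}$ for a fixed $e \neq g \in \Delta$, sets $K = M \cap \Gamma$ for an arbitrary finite index normal $M$, and kills the witness modulo $M$ by conjugating $g_{(w)}$ by the element of $K$ carrying $w$ to $v$. Your additional checks (non-triviality of the witness via Lemma 2.2 and normality of $M \cap \Gamma$ in $\Gamma$) are details the paper leaves implicit, not a difference in method.
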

\begin{proof}
    Given a finite index normal subgroup $M \leq G(\Delta) \rtimes \Gamma$, let $K = M \cap \Gamma$, and fix $k \in K$ such that $kw = v$. We consider $h = g_{(v)}g_{(w)}^{-1}$ for fixed $e \neq g \in \Delta$. In $(G(\Delta) \rtimes \Gamma)/M$, this is equal to $g_{(v)}kg_{(w)}^{-1}k^{-1} = g_{(v)}g_{(kw)}^{-1} = g_{(v)}g_{(v)}^{-1} = e$, so $h$ is trivial in every finite quotient, so we are done.
\end{proof}
\begin{corollary}
    If $G(\Delta) \rtimes \Gamma$ is residually finite, then for all $v, w \in V$ there is a finite index normal $K \leq \Gamma$ such that $w \notin Kv$.
\end{corollary}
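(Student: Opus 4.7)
The plan is to recognize that Corollary 3.3 is nothing more than the contrapositive of Theorem 3.3, so no new argument is required. This mirrors the pattern already set by Corollaries 3.1 and 3.2, each of which restates its preceding theorem by negating both hypothesis and conclusion. Concretely, I would assume $G(\Delta) \rtimes \Gamma$ is residually finite, fix distinct $v, w \in V$, and invoke Theorem 3.3: since the conclusion ``$G(\Delta) \rtimes \Gamma$ is not residually finite'' fails, its hypothesis must fail for this particular pair, which is exactly the existence of a finite index normal $K \leq \Gamma$ with $w \notin Kv$.

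There is one minor point of bookkeeping worth flagging in the write-up, which is that the statement quantifies over all $v, w \in V$, whereas Theorem 3.3 insists that $v$ and $w$ be distinct. The case $v = w$ is degenerate, since $v = e \cdot v \in Kv$ for every subgroup $K \leq \Gamma$, so no such $K$ can exist; I would therefore read the corollary as tacitly assuming $v \neq w$, or insert the word ``distinct'' for clarity. Beyond this cosmetic adjustment there is no obstacle, and I expect the proof in the paper to collapse to a single sentence citing Theorem 3.3.
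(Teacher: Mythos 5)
Your proposal is correct and matches the paper exactly: the paper offers no separate argument for this corollary, treating it (like Corollaries 3.1.1 and 3.2.1) as the explicit contrapositive of the preceding theorem, here Theorem 3.3. Your observation about the missing word ``distinct'' is also apt --- for $v = w$ one always has $w \in Kv$, so the corollary must indeed be read with the tacit assumption $v \neq w$ inherited from Theorem 3.3.
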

\begin{remark}
    Corollaries 3.1.1 and 3.2.1 already imply this condition if $\Delta$ is non-abelian or $v, w$ are non-neighbouring.
\end{remark}
\begin{proof}
    \textit{(of Theorem 1.1)} The forwards direction is Corollaries 3.1.1, 3.2.1, 3.3.1.\\
    For the backwards direction, let $H = G(\Delta) \rtimes \Gamma$ and fix $(w, \gamma) \in H$ and enumerate the orbits of $G$ on which $w$ is supported as $G_1, \dots, G_n$. Define a graph $G'$ to be the induced subgraph of $G$ on $G_1 \cup \dots \cup G_n$. Then $\Gamma$ acts on $G'$ via its action on $G$, so we can define a homomorphism $\func{\phi}{H}{G'(\Delta) \rtimes \Gamma}$ by sending each vertex of some $G_i$ to its respective vertex, and sending every other orbit to the identity. This is a homomorphism as it respects all the commutativity relations in $G(\Delta)$, and respects all the relations introduced by the graph wreath products as $G'$ is preserved by the action of $\Gamma$, so $G'(\Delta)$ is preserved by the action of $\Gamma$ on $G(\Delta)$, and $\phi(w, \gamma)$ is non-trivial as either $\gamma$ is, and we're done, or $w$ is non-trivial, so it is supported on $G'$ by definition, and $\phi$ is injective on $G'(\Delta)$ by Lemma 2.2. So we may assume we only have finitely many orbits.
    \begin{lemma}
        Under the hypotheses and notation of Theorem 1.1, given finite sets $E \subseteq \Gamma$ and $A \subseteq G$, there is a finite index normal subgroup $K \leq \Gamma$ such that $\Gamma \rightarrow \Gamma/K$ is injective on $E$ and $G \rightarrow K\setminus G$ restricts to an isomorphism on the induced subgraph spanned by $A$.
    \end{lemma}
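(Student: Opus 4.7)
The plan is to translate the two requirements of the lemma into concrete pair-by-pair conditions on $K$, produce a finite-index subgroup of $\Gamma$ witnessing each individual condition via the hypotheses of Theorem 1.1, and then intersect the normal cores of these finitely many subgroups. The reason this works is that every condition in question is of the form ``some fixed element lies outside the subgroup'' or ``the orbit of some fixed vertex misses some fixed set'', and both kinds of condition are preserved under passage to a smaller subgroup.

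First, unpack the requirements. Injectivity of $\Gamma \rightarrow \Gamma/K$ on $E$ is equivalent to $e_1^{-1}e_2 \notin K$ for each distinct $e_1, e_2 \in E$. For $G \rightarrow K\setminus G$ to restrict to an isomorphism on the induced subgraph spanned by $A$, one needs, for each distinct $v, w \in A$, both (i) $w \notin Kv$ (so that the two vertices remain distinct in the quotient), and (ii) if $(v, w) \notin E$ then in addition $Kw \cap N(v) = \emptyset$ (ruling out spurious edges---note that the existence of $k, k' \in K$ with $(kv, k'w) \in E$ is equivalent, after applying the graph automorphism $k^{-1}$, to $Kw \cap N(v) \neq \emptyset$). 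Edges of the induced subgraph on $A$ automatically survive in $K\setminus G$ by definition, so these are the only conditions to check.

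Each of these conditions is witnessed by a finite-index subgroup of $\Gamma$. Residual finiteness of $\Gamma$ (condition 1.1(1)) handles the conditions on $E$. Condition 1.1(3) directly provides, for each distinct non-adjacent pair $v, w \in A$, a finite-index subgroup $K_{v,w}$ with $K_{v,w} w \cap (N(v) \cup \{v\}) = \emptyset$, which yields (i) and (ii) in one go. For each distinct adjacent pair $v, w \in A$, condition 1.1(2) applies in either of its two alternatives: the first alternative directly supplies a finite-index subgroup with $w \notin K_{v,w} v$, while the second supplies $K_v$ with $K_v v \cap N(v) = \emptyset$, whence $w \in N(v)$ forces $w \notin K_v v$. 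Taking $K$ to be the intersection of the normal cores in $\Gamma$ of these finitely many witness subgroups produces a finite-index normal subgroup which, by the inheritance observation of the first paragraph, satisfies every listed condition at once. The only genuine content is the initial translation of the graph-isomorphism requirement into pair-by-pair conditions precisely matching the hypotheses of Theorem 1.1; the rest is a routine finite intersection, so there is no real obstacle to overcome.
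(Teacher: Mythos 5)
Your proposal is correct and follows essentially the same route as the paper: translate injectivity on $E$ and the graph-isomorphism requirement on $A$ into finitely many pairwise conditions, obtain a finite-index witness for each from conditions 1.1(1)--(3), and intersect (normal cores of) these witnesses, then check that distinct vertices stay distinct and non-adjacent pairs acquire no spurious edges. The only cosmetic differences are that you treat adjacent pairs pointwise where the paper picks one witness per orbit in the non-abelian case (using normality to spread it over the orbit), and you make the normal-core step explicit, which is slightly more careful than the paper's phrasing.
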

    \begin{proof}
        Define a finite index normal subgroup $K$ of $\Gamma$ as the intersection of the following finite list of subgroups:
        \begin{enumerate}
            \item If $\Delta$ is abelian, for each pair of neighbouring vertices in $\operatorname{supp} (w)$, take a finite index normal subgroup as guaranteed by 1.1(2);
            \item If $\Delta$ is non-abelian, for each orbit, choose a vertex $v$ in it and take a normal finite index $K'$ as guaranteed in 1.1(2) (For any other $w$ in the same orbit as $v$, $K'$ also satisfies 1.1(2) with respect to $w$ by normality of $K'$);
            \item If $v, v' \in \operatorname{supp} (w)$ are not neighbouring and not equal, include $K_{vv'}$ as guaranteed by 1.1(3);
            \item Some normal finite index subgroup such that the quotient is injective on $E$.
        \end{enumerate}
        This is finite index and normal. Form the quotient graph of $G$ by the action of $K$, ie the graph $K \setminus G$. We claim this graph quotient restricts to an isomorphism on $A$. If $a, b \in A$ are not adjacent, then we have that $\emptyset = Ka \cap (N(b) \cup \{b\})$ so that, as $ka, k'b$ are adjacent iff $k'^{-1}ka, b$ are, no elements of $Ka, Kb$ are adjacent. Furthermore, no two vertices in the support are sent to the same vertex in the image by the inclusion of subgroups of types (1)-(3). 
    \end{proof}
    Returning to the proof of Theorem 1.1, we obtain a $K$ from the above lemma by applying it in the case $E = \{e, \gamma\}$ and $A = \operatorname{supp}(w)$.
    \\Define a homomorphism $\phi \colon H \rightarrow (K \setminus G)(\Delta) \rtimes \Gamma/K$ as follows. We define the homomorphism on $G(\Delta)$ by $\Delta_{(v)} \mapsto \Delta_{(Kv)}$ and on $\Gamma$ by the quotient map. \\We claim $\phi$ is indeed a homomorphism. Note that the action of $\Gamma$ on $G$ descends by definition of the quotient graph to an action of $\Gamma/K$ on $K \setminus G$. This is also seen to respect the commutativity relations by definition of a quotient graph, with the exception of if $\Delta_x, \Delta_{x'}$ share an edge, and $Kx = Kx'$. If $\Delta$ is commutative then the commutativity relations are respected and the homomorphism is well defined. Otherwise we may write $x = kx'$ for some $k \in K$, but we know these can't neighbour by the inclusion of the subgroups of the second type corresponding to the orbit of $x$, so there is no commutativity relation needing to be respected.\\The subgroup $(K \setminus G)(\Delta)$ is finite index in $(K \setminus G)(\Delta) \rtimes \Gamma/K$, so $(K \setminus G)(\Delta) \rtimes \Gamma/K$ is residually finite. It remains to prove $\phi(w, \gamma)$ is non-trivial; if so the result follows by composing $\phi$ with some map to a finite group which maps $\phi(w, \gamma)$ to a non-identity element. If $\gamma \neq e$, then $\gamma$ is non-trivial in $\Gamma/K$ by the conclusion of Lemma 3.4. If $w \neq e$, then as we know the induced graph on $\operatorname{supp} (w)$ in $G$ is isomorphic to the induced graph on $K(\operatorname{supp} (w))$ in $K\setminus G$, by definition, $\phi$ restricts to an isomorphism on $\operatorname{supp} (w)$ by Lemma 2.2, so $w$ is non-trivial in $(K \setminus G)(\Delta)$, so we are done.
\end{proof}
There are two special cases within this which we shall state explicitly. By a regular graph wreath product we mean one in which $G$ has vertices corresponding to the elements of $\Gamma$ and $\Gamma$ acts by left multiplication.
\begin{corollary}
    For regular graph wreath products, $G(\Delta) \rtimes \Gamma$ is residually finite iff \begin{enumerate}
        \item $\Delta$ is abelian or there is some finite index subgroup such that $K \cap N(e) = \emptyset$.
        \item For all $w \in V$ not neighbouring the identity, there is a finite index $K \leq \Gamma$ such that $Kw \cap (N(e) \cup \{e\}) = \emptyset$.
    \end{enumerate}
\end{corollary}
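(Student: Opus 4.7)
The plan is to apply Theorem 1.1 directly and use the transitivity of the regular action to reduce every condition involving an arbitrary vertex to one involving only the identity vertex $e \in \Gamma = V$. The key observation I would record first is that because $\Gamma$ acts by graph automorphisms regularly on itself, $N(v) = v \cdot N(e)$ for all $v$, whence for any subgroup $K \leq \Gamma$,
\[
Kv \cap N(v) = v\bigl((v^{-1}Kv) \cap N(e)\bigr), \qquad Kw \cap (N(v) \cup \{v\}) = v\bigl((v^{-1}Kv)(v^{-1}w) \cap (N(e) \cup \{e\})\bigr).
\]
Since the conjugate of a finite index subgroup is again finite index, these identities let me translate each quantifier over pairs of vertices into a quantifier over finite index subgroups alone.

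With this dictionary, the non-abelian branch of condition 1.1(2) --- ``for all $v$, some finite index $K$ has $Kv \cap N(v) = \emptyset$'' --- is immediately equivalent to its instance at $e$, namely ``some finite index $K$ has $K \cap N(e) = \emptyset$'', which is the non-abelian alternative of Corollary condition (1). Similarly, parameterising 1.1(3) by $u = v^{-1}w$, which ranges over $\Gamma \setminus (N(e) \cup \{e\})$ as $v \neq w$ range over distinct non-neighbouring vertices, shows 1.1(3) is equivalent to condition (2) of the Corollary. The abelian branch of 1.1(2) requires one small extra observation: given neighbouring $v, w$, set $u = v^{-1}w \in N(e)$; since the graph is simplicial $u \neq e$, so residual finiteness of $\Gamma$ (already required by 1.1(1)) supplies a finite index $K'$ with $u \notin K'$, and then $K = vK'v^{-1}$ satisfies $w \notin Kv$. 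Hence the abelian branch is automatic in the regular setting, so 1.1(2) collapses cleanly to Corollary condition (1).

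The only thing to be careful about is that the Corollary as stated omits residual finiteness of $\Gamma$ and $\Delta$ from its list of conditions; these should be regarded as implicit, since they are forced by the embeddings $\Gamma, \Delta \leq G(\Delta) \rtimes \Gamma$ noted after Lemma 2.2. I do not anticipate any genuine obstacle: once the dictionary above is written down, the proof is a routine term-by-term verification that the conditions of Theorem 1.1, specialised to the regular action of $\Gamma$ on itself, are precisely those listed in the Corollary.
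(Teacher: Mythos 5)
Your proposal is correct and takes essentially the same route as the paper: specialise Theorem 1.1 to the regular action, use transitivity (your conjugation dictionary) to reduce the non-abelian branch of 1.1(2) and condition 1.1(3) to statements at the identity vertex, and note that the abelian branch of 1.1(2) is automatic from residual finiteness of $\Gamma$. Your observation that residual finiteness of $\Gamma$ and $\Delta$ must be read as implicit in the corollary's statement is apt, since the paper's one-line proof likewise tacitly invokes 1.1(1).
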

\begin{proof}
    For all neighbouring $v, w \in V$, residual finiteness of $\Gamma$ guarantees a finite index subgroup such that $w \notin Kv$, and both other modifications come from the transitivity of $G$.
\end{proof}
\begin{corollary}
    \textit{(Theorem 1.3)} The permutational wreath product, $\Delta \wr_X \Gamma$ is residually finite iff both $\Delta$ and $\Gamma$ are, and either $\Delta$ is abelian and for all $x, y \in X$ there is a finite index subgroup $K \leq \Gamma$ such that $x \notin Ky$, or all vertex-stabilisers are finite index in $\Gamma$
\end{corollary}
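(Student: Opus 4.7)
The plan is to obtain this as a direct specialization of Theorem 1.1 by taking $G$ to be the complete graph $K_X$ on the vertex set $X$, with $\Gamma$ acting on $G$ via its given action on $X$ (this is automatically an action by graph automorphisms, since any bijection of $X$ preserves the edge set of $K_X$). With this choice, $G(\Delta)$ is the free product of $X$-many copies of $\Delta$ modulo the relations forcing every two distinct copies to commute, so $G(\Delta) \cong \bigoplus_{x \in X} \Delta$ and $G(\Delta) \rtimes \Gamma$ is precisely the permutational wreath product $\Delta \wr_X \Gamma$.

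It then suffices to unpack the three conditions of Theorem 1.1 in this setting. Condition (1) is unchanged. For condition (3), the hypothesis concerns $v,w \in X$ that are not neighbouring and not equal, but in $K_X$ every two distinct vertices are adjacent, so no such pair exists and (3) holds vacuously. In condition (2), neighbouring pairs are exactly the distinct pairs, so the first alternative becomes: $\Delta$ is abelian and for every distinct $x,y \in X$ there is a finite index $K \leq \Gamma$ with $x \notin Ky$, which is the first alternative in the corollary. The second alternative says that for every $v \in X$ there is a finite index $K \leq \Gamma$ with $Kv \cap N(v) = \emptyset$; since $N(v) = X \setminus \{v\}$ in $K_X$, this forces $Kv \subseteq \{v\}$, i.e.\ $K \leq \operatorname{Stab}(v)$, so that $\operatorname{Stab}(v)$ has finite index.

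There is no genuine obstacle: the proof is essentially a translation between the language of graph wreath products and that of permutational wreath products. The only substantive observation needed is the identification made in the previous paragraph, namely that for the complete graph the condition ``$Kv$ avoids $N(v)$'' is equivalent to ``$K$ stabilises $v$'', which makes the second alternative of Theorem 1.1(2) coincide exactly with ``all vertex-stabilisers have finite index''.
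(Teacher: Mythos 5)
Your proposal is correct and follows essentially the same route as the paper: specialize Theorem 1.1 to the complete graph on $X$ (where $G(\Delta)\rtimes\Gamma \cong \Delta\wr_X\Gamma$), observe that condition 1.1(3) is vacuous, and translate 1.1(2) into the two alternatives of the corollary. Your explicit unpacking of the second alternative, namely that $Kv\cap N(v)=\emptyset$ on the complete graph forces $K\leq\operatorname{Stab}(v)$ and hence that the stabiliser has finite index, is in fact stated more cleanly than in the paper's own brief argument.
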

\begin{proof}
    This follows from Theorem 1.1 as in the case of a wreath product (ie the case of a complete graph) condition 1.1(3) is vacuous, and 1.1(2) reads ``Either $\Delta$ is abelian and for all $v, w \in V$ there is a finite index subgroup $K_{vw} \leq \Gamma$ such that $w \notin Kv$, or each vertex stabiliser is trivial" which (intersecting over all $K_{vw}$ for fixed $v$) can be seen to be equivalent. 
\end{proof}
\begin{remark}
    This corollary was also proven (albeit by different methods) in \cite{C14}.
\end{remark}
\section{Connections to the case of LEF groups}
\begin{definition}
    A group $\Gamma$ is \textit{locally embeddable into finite groups} (abbreviated LEF) if for all finite subsets $E \subseteq \Gamma$ there is a finite group $Q_E$ and an injective function $\func{f}{E}{Q_E}$ such that if $g, h,gh \in E$ then $f(g)f(h) = f(gh)$ 
\end{definition}
We wish to connect our study of graph wreath products of residually finite groups to those of LEF groups (first defined in \cite{GV97}). The following definition is needed to state the criteria for LEF groups.
\begin{definition}
    An action $\Gamma \acts G$ is an \textit{LEF action} if for all finite subsets $A \subseteq \Gamma$ and $E \subseteq G$, there is a finite group $Q$, a function $\func{\phi}{A}{Q}$ such that if $a, b, ab \in A$ then $\phi(a)\phi(b) = \phi(ab)$, a finite graph $Y$ and an inclusion $\func{\psi}{E}{Y}$ of an induced subgraph such that for all $a \in A, e \in E$ if $a\cdot e \in E$, then $\phi(a)\cdot \psi(e) = \psi(a \cdot e)$
\end{definition}
The following theorem was proven in \cite{AB}.
\begin{theorem}
    Let $\Gamma$ and $\Delta$ be groups, and $G$ be a graph. Suppose $\Gamma\acts G$ via graph automorphisms. Then $G(\Delta) \rtimes \Gamma$ is LEF if and only if \begin{enumerate}
        \item $\Gamma, \Delta$ are LEF
        \item $\Gamma \acts G$ is an LEF action
    \end{enumerate}
\end{theorem}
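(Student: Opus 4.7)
The plan is to prove the biconditional by translating directly between the LEF structure of $G(\Delta) \rtimes \Gamma$ and the combined LEF data of $\Gamma$, $\Delta$, and the action $\Gamma \acts G$.

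For the forward direction, $\Gamma$ and $\Delta$ are LEF because each embeds as a subgroup of $G(\Delta) \rtimes \Gamma$ (using Lemma 2.2 for $\Delta$) and LEF passes to subgroups. For the LEF action, given finite $A \subseteq \Gamma$, $E \subseteq V$ and a nontrivial $g \in \Delta$, I would take a finite set $F \subseteq G(\Delta) \rtimes \Gamma$ containing $A$, the elements $g_{(v)}$ for $v \in E$, the conjugates $a g_{(v)} a^{-1}$ together with $g_{(av)}$ whenever $a \in A$ and $av \in E$, all commutators $[g_{(v)}, g_{(w)}]$ for $v, w \in E$, and enough subproducts to witness multiplicativity. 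An LEF map $f \colon F \to Q$ then gives $\phi := f|_A$. I would take $Y$ to be the graph on the conjugation-closure $Q \cdot \{f(g_{(v)}) : v \in E\} \subseteq Q$, with $q_1, q_2$ adjacent iff $q_1 \neq q_2$ and $[q_1, q_2] = e_Q$, so that $Q$ acts on $Y$ by conjugation through graph automorphisms; and set $\psi(v) := f(g_{(v)})$. The injectivity and multiplicativity of $f$, combined with the crucial fact that $[g_{(v)}, g_{(w)}] = e$ in $G(\Delta)$ iff $v = w$ or $(v,w) \in E$, will give that $\psi$ is an induced subgraph embedding and that $\phi(a) \cdot \psi(v) = \psi(av)$ whenever $av \in E$.

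For the backward direction, given finite $F_0 \subseteq G(\Delta) \rtimes \Gamma$, I would fix for each element of $F_0$ an expression $\delta^{(1)}_{(v_1)} \cdots \delta^{(n)}_{(v_n)} \cdot \gamma$, and let $A \subseteq \Gamma$, $D \subseteq \Delta$, $E \subseteq V$ collect the $\gamma$-, $\delta$-, and $v$-components appearing, enlarged to carry the needed products. Applying the LEF-action hypothesis to $(A, E)$ gives $Q_\Gamma, \phi, Y, \psi$ as in Definition 4.2, and applying LEF of $\Delta$ to $D$ gives $Q_\Delta, f_\Delta$. I would then take the finite target $Y(Q_\Delta) \rtimes Q_\Gamma$, with action of $Q_\Gamma$ extending its action on $Y$, and define the LEF map for $F_0$ by sending the chosen expression to $f_\Delta(\delta^{(1)})_{(\psi(v_1))} \cdots f_\Delta(\delta^{(n)})_{(\psi(v_n))} \cdot \phi(\gamma)$.

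The main obstacle I expect is the backward direction: showing this translation is well-defined on $F_0$ (independent of word-choice) and injective. Well-definedness reduces to verifying that the relevant defining relations of $G(\Delta) \rtimes \Gamma$ are respected in $Y(Q_\Delta) \rtimes Q_\Gamma$: the commutativity relations are preserved because $\psi$ is an induced subgraph embedding, so every edge of the induced subgraph on $E$ becomes an edge of $Y$ between elements of $\psi(E)$, forcing the images of $\Delta_{(v)}$ and $\Delta_{(w)}$ to commute; the semidirect-product relations are preserved by the LEF-action compatibility $\phi(a)\psi(v) = \psi(av)$. For injectivity on the $G(\Delta)$-components, the strategy is to invoke Lemma 2.2 — the subgroup of $Y(Q_\Delta)$ generated by the $(Q_\Delta)_{(\psi(v))}$ for $v \in E$ is a graph product over the induced subgraph $\psi(E)$, whose underlying graph matches that of the subgroup of $G(\Delta)$ generated by the $\Delta_{(v)}$ for $v \in E$ — and then to enlarge $D$ and $F_0$ enough that LEF of $\Delta$ forces $f_\Delta$ to distinguish all $\delta$-components needed to separate the finitely many words appearing in the differences of elements of $F_0$.
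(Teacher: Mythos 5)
The paper does not actually prove this statement; it quotes it from \cite{AB}, so I assess your argument on its own terms. Your forward direction is essentially sound: $\Gamma$ and $\Delta$ sit inside $G(\Delta)\rtimes\Gamma$ (Lemma 2.2 for $\Delta$) and LEF passes to subgroups, and your commutation-graph construction for the LEF action works because, for a fixed $e\neq g\in\Delta$, one has $[g_{(v)},g_{(w)}]=e$ in $G(\Delta)$ exactly when $v=w$ or $(v,w)\in E$ (the nontrivial half is the free-product fact already used in Theorem 3.2 via Lemma 2.2); enlarging your finite set by the relevant products and inverses transfers this faithfully to $Q$, and the conjugation closure makes $Q$ act on $Y$ by graph automorphisms. (You do need $\Delta\neq 1$ to choose $g$, which is implicit in the statement.)

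The backward direction, however, has genuine gaps. First, $Y(Q_\Delta)\rtimes Q_\Gamma$ is not a ``finite target'': a graph product of nontrivial finite groups over a non-complete finite graph is infinite (already $C_2\ast C_2$ is), so it cannot serve as the finite group required by Definition 4.1. This is repairable — $Y(Q_\Delta)$ is residually finite (by \cite{G90}, or by Theorem 1.1 with $K$ trivial) and has finite index in $Y(Q_\Delta)\rtimes Q_\Gamma$, so one can compose with a further finite quotient separating the finite image set — but that step must be supplied. Second, injectivity is not actually established. Definition 4.2 does not require $\phi$ to be injective, so two elements of $F_0$ with distinct $\Gamma$-components may collapse; notice your backward argument never uses the hypothesis that $\Gamma$ is LEF, which is needed exactly here (e.g.\ pair $\phi$ with an injective partial homomorphism $A\to Q_\Gamma'$ and let the product act on $Y$ through the first factor). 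On the $G(\Delta)$-side, ``separating the differences of elements of $F_0$'' does not yield injectivity because your map is defined via chosen words and is not a homomorphism: after identifying the target subgroup as the graph product over the induced subgraph on $\psi(E)$ via Lemma 2.2, the essential missing step is that the letterwise map is injective on the relevant finite set, which requires the reduced-word/normal-form theorem for graph products (reduced words with nontrivial syllables map to reduced words, since $f_\Delta$ is injective and $\psi$ is an induced embedding) — a tool the present paper deliberately avoids developing. Likewise ``the relations are respected'' is not available verbatim, since $f_\Delta$ and $\phi$ are only partial homomorphisms; one must fix expressions, close $E$ under the partial $A$-action, and enlarge $D$ and $A$ by the finitely many intermediate letters occurring in the finitely many rewritings needed for the multiplicativity instances $x,y,xy\in F_0$. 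All of this can be done, but as written the backward direction is a plan rather than a proof.
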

It is easy to see that all residually finite groups are LEF (indeed, the injective function can even be taken to be a homomorphism from $G$ to $Q_E$ which is injective on $E$) so we would expect that the criteria of Theorem 1.1 should imply the criteria of Theorem 4.1. As explained above, 1.1(1) implies 4.1(1), so it remains to show that the remaining criteria of 1.1 imply 4.1(2)
\begin{proposition}
    Under the notation and conditions of 1.1, $\Gamma \acts G$ is an LEF action.
\end{proposition}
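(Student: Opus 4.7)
The plan is to repackage the construction from the proof of Lemma 3.4, with a general finite vertex set playing the role of $\operatorname{supp}(w)$. Given finite subsets $A \subseteq \Gamma$ and $E \subseteq V$ as in Definition 4.2, I would aim to produce a single finite index normal subgroup $K \leq \Gamma$ whose quotient supplies both the finite group $Q = \Gamma / K$ and, via the quotient graph $K \setminus G$, the finite graph $Y$ witnessing the LEF action property.

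The subgroup $K$ will be the intersection of the following finite collection of finite index normal subgroups of $\Gamma$: a subgroup $K_{vw}$ from 1.1(3) for each non-neighbouring, unequal pair $v, w \in E$; either a subgroup from 1.1(2) for each neighbouring pair in $E$ (if $\Delta$ is abelian) or a single subgroup $K_v$ from 1.1(2) for each vertex $v \in E$ (if $\Delta$ is non-abelian); and a finite index normal subgroup supplied by residual finiteness of $\Gamma$ chosen so that the quotient is injective on $A$. Replacing each subgroup by its normal core if necessary (which preserves the defining conditions), this is a finite intersection and so $K$ is itself finite index and normal in $\Gamma$.

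I would then take $Q = \Gamma/K$ with $\phi \colon A \to Q$ the restriction of the quotient map, define $\psi(v) = Kv$, and let $Y$ be the induced subgraph of $K \setminus G$ on the finite $Q$-invariant set $Q \cdot \psi(E)$. The $Q$-action on $Y$ descends from the $\Gamma$-action on $G$ exactly as in the proof of Theorem 1.1, $\phi$ is a genuine (hence in particular partial) homomorphism, and the compatibility equation $\phi(a) \cdot \psi(e) = \psi(a \cdot e)$ is immediate from the definition of the quotient graph action.

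The main verification, and the most delicate step, is that $\psi$ realises $E$ as an induced subgraph of $Y$. Adjacency of images is automatic from the quotient. For non-adjacency and distinctness of images of non-neighbouring pairs, the 1.1(3) factors give $Kw \cap (N(v) \cup \{v\}) = \emptyset$, so $Kv$ and $Kw$ are distinct and non-adjacent in $K \setminus G$. For distinctness of images of neighbouring pairs, the abelian case follows directly from the 1.1(2) factor; the slick point in the non-abelian case is that the single condition $K_v v \cap N(v) = \emptyset$ per vertex suffices, since any $w \in N(v) \cap E$ then automatically lies outside $Kv$, so no per-pair subgroup is needed. Once these checks are in place, Definition 4.2 is satisfied and $\Gamma \acts G$ is an LEF action.
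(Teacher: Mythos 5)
Your proposal is correct and is essentially the paper's own argument: the paper obtains the same subgroup $K$ by invoking Lemma 3.4 (whose proof you have effectively inlined) and then takes $Q=\Gamma/K$, $Y = K\setminus G$ with the quotient maps, exactly as you do. The only difference is cosmetic: the paper first reduces without loss of generality to finitely many $\Gamma$-orbits so that all of $K\setminus G$ is finite, whereas you ensure finiteness of $Y$ by restricting to the induced subgraph on $Q\cdot\psi(E)$; both devices work.
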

\begin{proof}
    
    Let $E \subseteq G$ and $A \subseteq \Gamma$ be finite. Without loss of generality, $G$ has finitely many $\Gamma$-orbits (indeed, we may quotient out by any orbits which do not contain an element of $E$). By Lemma 3.4, we obtain a finite index normal subgroup $K \leq \Gamma$ such that $G \rightarrow K\setminus G$ is injective on $A$ and $\Gamma \rightarrow \Gamma/K$ is injective on $E$. We claim that we may take $Q = \Gamma/K$, $Y = K \setminus G$ and $\phi, \psi$ to be the quotient maps restricted to $E$ and $A$ respectively. By definition, $\psi$ and $\phi$ are injective, and as $K$ is finite index, $Q$ and $Y$ are finite. By the same argument as in the proof of 1.1, $\Gamma/K$ acts on $K \setminus G$ by quotienting the action of $\Gamma$ on $G$ by $K$, and hence $\Gamma \acts G$ is an LEF action
\end{proof}
This links Theorem 1.1 with \cite{AB}, concluding the primary aim of this section.\\
Using Theorems 1.1 and 4.1 together, we may construct examples of non-finitely presented LEF groups by producing LEF graph wreath products that are not residually finite (and hence are not finitely presented) due to the following result proven in \cite{GV97}
\begin{proposition}
    Finitely presented LEF groups are residually finite.
\end{proposition}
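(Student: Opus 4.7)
The plan is to show, for each non-identity element $g$ in a finitely presented LEF group $\Gamma = \langle S \mid R\rangle$ (with $S$ and $R$ both finite), that $\Gamma$ admits a finite quotient in which the image of $g$ is non-trivial. The LEF hypothesis lets us model the multiplication of $\Gamma$ on a finite ``window'' $E \subseteq \Gamma$ inside some finite group $Q$; finite presentability lets us enlarge $E$ so that it contains enough data to certify that every defining relator is killed, forcing the partial map to extend to a genuine homomorphism of $\Gamma$.

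Concretely, I would fix a finite generating set $S$ (closed under inverses) and a finite set of defining relators $R$. For each relator $r = s_1 s_2 \cdots s_k$ with $s_i \in S$, record the partial products $p_0 = e,\ p_1 = s_1,\ \ldots,\ p_k = s_1\cdots s_k = e$ evaluated in $\Gamma$. Let $E \subseteq \Gamma$ be the finite set consisting of $e$, $g$, the elements of $S$, and all these partial products over every $r \in R$. Applying the LEF property to $E$, one obtains a finite group $Q$ and an injection $f \colon E \to Q$ that is multiplicative on any triple $(a,b,ab)$ whose entries all lie in $E$.

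Next I would define $\tilde f \colon F(S) \to Q$ on the free group on $S$ by $s \mapsto f(s)$ and argue that $\tilde f$ sends every $r \in R$ to the identity, so descends to a homomorphism $\phi \colon \Gamma \to Q$. For each relator, the equality $p_j = p_{j-1} \cdot s_j$ together with $p_{j-1}, s_j, p_j \in E$ gives $f(p_j) = f(p_{j-1})f(s_j)$ via the LEF multiplicativity, and a straightforward induction, starting from $f(e) = e_Q$ (forced by $f(e)f(e) = f(e)$ and injectivity), yields $\tilde f(r) = f(p_k) = f(e) = e_Q$. Finally $\phi(g) = f(g) \neq f(e) = e_Q$ because $g$ and $e$ are distinct elements of $E$ on which $f$ is injective, so $g$ survives in the finite image $\phi(\Gamma) \leq Q$, proving residual finiteness.

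The main obstacle is essentially bookkeeping: the set $E$ must be enlarged to contain every intermediate partial product arising in the induction, across all (finitely many) relators, so that each invocation of LEF multiplicativity is legitimate. This is precisely the point at which finite presentability is needed, since without a finite list of relators no single finite $E$ would suffice. Once $E$ is chosen correctly, each remaining verification, namely $f(e) = e_Q$, well-definedness of $\phi$, and non-triviality of $\phi(g)$, is immediate from the definitions.
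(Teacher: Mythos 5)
Your overall strategy is the standard argument (the paper itself does not reprove this statement; it simply cites \cite{GV97}), and most of the bookkeeping is right: enlarging $E$ to contain the partial products of every relator does force the map $s \mapsto f(s)$ to kill all relators and hence to induce a homomorphism $\phi \colon \Gamma \to Q$. The gap is in the very last step, where you assert $\phi(g) = f(g)$. The homomorphism $\phi$ is defined on generators, so $\phi(g)$ is computed by choosing a word $g = t_1 \cdots t_m$ with $t_i \in S$ and multiplying $f(t_1)\cdots f(t_m)$ in $Q$; nothing in your choice of $E$ relates this product to the value $f(g)$, because $E$ contains the partial products of the relators but not the partial products $t_1 \cdots t_j$ of any word representing $g$. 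With your $E$ it is a priori possible that $\phi(g) = e_Q$ even though $f(g) \neq f(e)$, so injectivity of $f$ on $\{e, g\}$ alone does not finish the argument.

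The fix is of exactly the same nature as the rest of your bookkeeping: fix once and for all a word $t_1 \cdots t_m$ over $S$ representing $g$, add its partial products $q_j = t_1 \cdots t_j$ (evaluated in $\Gamma$) to $E$ before invoking the LEF property, and then the same induction you use for relators gives $\phi(g) = f(t_1)\cdots f(t_m) = f(q_m) = f(g) \neq f(e) = e_Q$. (A similar small remark applies to relators containing inverse letters: since $s, s^{-1}, e \in E$, multiplicativity gives $f(s^{-1}) = f(s)^{-1}$, so the induced map on the free group behaves as you claim; this is fine once noted, or one can pass to a finite presentation whose relators are positive words in the inverse-closed generating set.) With these additions the proof is complete.
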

We construct an example via this method.
\begin{example}
    Consider the graph wreath product of Example 1.2. From that example, we know it is not residually finite. We claim it is LEF. Let $E \subseteq G$ and $A \subseteq \Gamma$ be given finite subsets. List all $n \in \mathbb N$ such that for some $i \in E$, $i + n! \in E$ and call the set $B$. Fix $m \in \mathbb Z$ sufficiently large so that all elements of $A \cup (B + E)$ (where $B + E$ denotes all the sums of the form $b +e$ for $b \in B, e \in E$) are distinct modulo $m$. Define $H$ to be the subgraph of $G$ on the same vertices, but with only the edges coming from $n \in B $. We then get maps $E \rightarrow m\mathbb Z\setminus H$ and $A \rightarrow \Gamma/m\mathbb Z$ which are injective and present $E$ as an induced subgraph of $m\mathbb Z\setminus H$. Therefore $\Gamma \acts G$ is LEF, and hence is not finitely presented.
\end{example}
This is not an exhaustive method for constructing non-finitely presented graph wreath products as we will explain. From \cite{B61} for the regular case and \cite{C06} for the general case, we have the following: \begin{theorem}
    \textit{(Theorem 1.1 in \cite{C06})} If $W \neq 1$, then $W \wr_X G$ is finitely presented if and only if: \begin{enumerate}
        \item $W$ and $G$ are finitely presented;
        \item $G$ acts on $X$ with finitely generated stabilisers;
        \item The product action $G \acts X^2$ has finitely many orbits.
    \end{enumerate}
\end{theorem}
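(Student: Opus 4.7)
The plan is to prove the two directions separately. For sufficiency, I would construct an explicit finite presentation of $W \wr_X G$ from the given data, and for necessity, I would extract each of the three conditions from any finite presentation, with the orbit condition being the most subtle.

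For sufficiency, fix a basepoint $x_0 \in X$, finite presentations $\langle S_G \mid R_G \rangle$ of $G$ and $\langle S_W \mid R_W \rangle$ of $W$, a finite generating set $S_{x_0}$ of the stabiliser $G_{x_0}$, and elements $g_1, \ldots, g_k \in G$ with $g_i \cdot x_0 = y_i$ such that $(x_0, y_1), \ldots, (x_0, y_k)$ exhaust the $G$-orbits on $X^2$ of the form $G \cdot (x_0, y)$ with $y \ne x_0$. The candidate presentation has generators $S_G \cup S_W$ (identifying $S_W$ with a generating set of $W_{(x_0)}$) and relations $R_G$, $R_W$, $[h, w] = 1$ for $h \in S_{x_0}$ and $w \in S_W$, and $[w, g_i v g_i^{-1}] = 1$ for $w, v \in S_W$ and each $i$. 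I would then verify that this finite set of relations implies all the defining relations of $W \wr_X G$, using the stabiliser generators to show that any two elements of $G$ sending $x_0$ to the same vertex induce the same copy of $W$, and the orbit representatives $g_i$ to reduce an arbitrary commutation $[W_{(x)}, W_{(y)}] = 1$ to one of the chosen representatives.

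For necessity, $G$ is finitely presented as a quotient of $W \wr_X G$ by $W^{(X)}$, which is normally finitely generated since $W$ is finitely generated (itself a consequence of $W \wr_X G$ being finitely generated). That $W$ is finitely presented follows from a more delicate argument involving the subgroup $W_{(x_0)}$ together with the short exact sequence $1 \to W^{(X)} \to W \wr_X G \to G \to 1$. The stabiliser condition and the orbit condition then emerge by analysing which families of relations in $W \wr_X G$ can actually be consequences of finitely many others: failure of either condition produces an infinite essentially independent family of commutation relations.

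The main obstacle is establishing the necessity of the orbit finiteness condition on $X^2$. A naive relation-counting argument is insufficient, since finitely many relations can have infinitely many consequences, so one needs a homological obstruction. The standard approach, as taken in \cite{C06}, is to apply the Lyndon-Hochschild-Serre spectral sequence to the short exact sequence above: $H_2(W \wr_X G, \Z)$ receives a contribution indexed by the orbits of $G \acts X^2$, encoding exactly the commutation relations between distinct $W$-copies, and since finitely presented groups have finitely generated $H_2$, this orbit set must be finite. A parallel argument on the $E_2$-page would identify the stabiliser condition, and together with the bookkeeping above this would complete the proof.
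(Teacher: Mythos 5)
The paper does not prove this statement at all: it is quoted verbatim as Theorem 1.1 of \cite{C06} (with \cite{B61} covering the regular case), so there is no internal proof to compare yours with; your argument has to stand on its own as a proof of Cornulier's theorem, and as it stands it does not.

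The sufficiency half is essentially the standard argument and is fine in outline (one quibble: condition (3) only gives finitely many $G$-orbits on $X$, not transitivity, so you need one basepoint and one family of stabiliser/commutation relations per orbit of $X$, plus orbit representatives for pairs drawn from different orbits). The genuine gap is in the necessity of condition (3). The homological obstruction you propose cannot work for all $W \neq 1$: by K\"unneth, $H_2(W^{(X)})$ decomposes as a direct sum of copies of $H_2(W)$ indexed by $X$ together with cross terms $H_1(W)\otimes H_1(W)$ indexed by unordered pairs, and it is only the cross terms that see the orbits of $G$ on $X^2$. If $W$ is perfect (say a nonabelian finite simple group), $H_1(W)=0$, the cross terms vanish, and your obstruction detects nothing, yet the theorem asserts (3) is still necessary. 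Even when $W^{\mathrm{ab}}\neq 0$, the term $E^2_{0,2}=H_0(G,H_2(W^{(X)}))$ contributes to $H_2(W\wr_X G)$ only after the differentials from $E^2_{2,1}=H_2(G,(W^{\mathrm{ab}})^{(X)})$ and $E^2_{3,0}$ are taken, and those terms need not be finitely generated, so finite generation of $H_2$ of the whole group does not directly bound the orbit-indexed piece. A route that works for arbitrary nontrivial $W$ is a compactness/direct-limit argument: realise $W\wr_X G$ as the increasing union of ``partial'' wreath products in which the commutations $[W_{(x)},W_{(y)}]=1$ are imposed only for pairs in finitely many $G$-orbits of $X^2$ (these intermediate groups are exactly graph wreath products in the sense of this paper, and the successive maps are proper quotients because in a graph product nontrivial vertex groups at non-adjacent vertices do not commute); a finitely presented group cannot be such a strictly increasing direct limit, so only finitely many orbits can occur. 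Your remaining necessity claims are also only gestures: finite presentability of $W$ does not follow from a retraction (there is no retraction of $W\wr_X G$ onto $W$ unless $W$ is abelian), and finite generation of stabilisers needs its own argument rather than ``a parallel argument on the $E_2$-page.''
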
Combining this with Theorem 1.3, we obtain wreath products (and hence, graph wreath products) that are residually finite but not finitely presented. An example is demonstrated below.
\begin{example}
    Let $W = C_3$, $G = \mathbb Z$, $X =\mathbb Z$, and let $G \acts X$ by translation. If we let $x, y \in X$, taking $m > x - y$ we see that $y \notin (m\mathbb Z) x$, so that $W \wr_X G$ is residually finite, however by \cite{B61}, this regular wreath product is not finitely presented as $G$ is infinite.
\end{example}
The reason that the construction of non-finitely presented graph wreath products in this manner is largely ineffective is that a more general and comprehensive statement of when graph wreath products are finitely presented (due to Kropholler and Martino, \cite{KM16}) is known: 
\begin{theorem}
    \textit{(Theorem 2.4 in \cite{KM16})} A graph wreath product, $G(\Delta) \rtimes \Gamma$ is finitely presented if and only if \begin{enumerate}
        \item $\Delta$ is finitely presented,
        \item $\Gamma$ is finitely presented,
        \item $G$ has finitely many orbits of vertices and edges,
        \item Every vertex stabiliser is finitely generated.
    \end{enumerate}
\end{theorem}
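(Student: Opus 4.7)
The plan is to prove each implication using the semidirect-product structure of $G(\Delta) \rtimes \Gamma$.

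For the forward direction, condition (2) follows because $\Gamma$ is a retract of $G(\Delta) \rtimes \Gamma$ via the projection with kernel $G(\Delta)$ and the canonical section, and retracts of finitely presented groups are finitely presented. For (1), I would fix a vertex $v$, quotient $G(\Delta) \rtimes \Gamma$ by the normal closure of all $\Delta_{(w)}$ with $w \notin \Gamma v$ (landing in a graph wreath product supported on the single orbit of $v$), and then extract $\Delta$ via a retraction onto $\Delta_{(v)}$ where available, or otherwise via a further quotient that makes $\Delta$ a direct factor. For (3), a finite generating set of $G(\Delta) \rtimes \Gamma$ involves only finitely many $\Delta_{(v)}$-generators, and conjugation by $\Gamma$ reaches only the $\Gamma$-orbits of those vertices, so $V$ must have finitely many orbits under $\Gamma$; a parallel orbit-counting argument on the 2-cells of a presentation complex corresponding to commutativity relations yields finitely many edge orbits. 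Condition (4) is the standard fact that a finitely presented group acting on a graph with finitely many vertex and edge orbits has finitely generated vertex stabilisers, via a compactness/Bass--Serre-style argument on the quotient 2-complex.

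For the backward direction, I would write down an explicit finite presentation. Choose finite orbit representatives $V_0 \subseteq V$ and $E_0 \subseteq E$ (by (3)), finite presentations $\langle S_\Delta \mid R_\Delta\rangle$ of $\Delta$ and $\langle S_\Gamma \mid R_\Gamma\rangle$ of $\Gamma$ (by (1), (2)), and a finite generating set $T_v$ of each $\Gamma_v$ for $v \in V_0$ (by (4)). The generators of the presentation are $S_\Gamma$ together with one copy $s^{(v)}$ of each $s \in S_\Delta$ for each $v \in V_0$. The relations are $R_\Gamma$; a copy of $R_\Delta$ over each $v \in V_0$; for each edge-orbit representative, chosen in the form $(v, \gamma w)$ with $v, w \in V_0$ and some $\gamma \in \Gamma$, the commutativity relations $[s^{(v)}, \gamma t^{(w)} \gamma^{-1}] = 1$ for all $s, t \in S_\Delta$; and for each $v \in V_0$, $t \in T_v$, $s \in S_\Delta$, the relation $[t, s^{(v)}] = 1$ encoding that $\Gamma_v$ fixes $v$ in its action on the graph product. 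By (1)--(4) this presentation is finite.

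The main obstacle will be showing the presented group is isomorphic to $G(\Delta) \rtimes \Gamma$ rather than a larger group. The easy half is a surjection from the presented group to $G(\Delta) \rtimes \Gamma$: all the listed relations hold in the target by construction. For the other half, I would construct an inverse via a normal-form argument, showing every word in the presented group can be reduced to a canonical form consisting of a graph-product-normal-form element followed by an element of $\Gamma$, with uniqueness matching the standard graph product normal form together with the semidirect decomposition. The subtlest point is handling edge orbits both of whose endpoints lie in the same $\Gamma$-orbit, where the commutativity relation at the orbit representative must propagate correctly via conjugation to every edge in the orbit. In the forward direction, establishing (1) is the other technical nuisance, since a clean retraction onto $\Delta$ is not always available and a more intricate subquotient argument is needed.
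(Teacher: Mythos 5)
You should first note that the paper does not prove this statement at all: it is quoted verbatim as Theorem 2.4 of \cite{KM16}, only to explain why the LEF construction of non-finitely-presented graph wreath products is not the most efficient route, so there is no in-paper argument to compare yours with (Kropholler and Martino prove it with topological finiteness machinery, not by direct presentation manipulation). Judged on its own, your backward direction is essentially right: the presentation you write down is the natural one, and the cleanest way to finish is not a normal-form argument but the universal property of the graph product --- inside the presented group define the copy of $\Delta$ at a vertex $\gamma v$ ($v \in V_0$) to be the conjugate $\gamma \langle s^{(v)}\rangle \gamma^{-1}$, use the stabiliser relations $[t, s^{(v)}]=1$ (with $T_v$ generating $\Gamma_v$) to see this is independent of the choice of $\gamma$, and use conjugates of the edge-representative relations to see that adjacent copies commute; this produces the inverse to the obvious surjection, including for edges with both endpoints in one orbit.

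The genuine gaps are all in the forward direction. For (1), your fallback does not work: if the orbit of $v$ contains an edge, the quotient identifying all copies of $\Delta$ along that orbit forces the images of adjacent copies to commute, so it only yields $\Delta^{\mathrm{ab}}$, not $\Delta$; moreover your preliminary quotient by the normal closure of all $\Delta_{(w)}$ with $w \notin \Gamma v$ is only known to be finitely presented after you already know there are finitely many orbits and that $\Delta$ is finitely generated, since quotients of finitely presented groups need not be finitely presented. Extracting finite presentability of $\Delta$ is a real theorem (it is part of the substance of \cite{C06} in the wreath case) and is simply not proved by your sketch. For (4), the ``standard fact'' you invoke is false: take any finitely presented $\Gamma$ with a non-finitely-generated subgroup $H$ acting on the edgeless graph with vertex set $\Gamma/H$ --- one vertex orbit, no edges, stabiliser not finitely generated. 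Indeed the theorem itself fails when $\Delta$ is trivial (a hypothesis the quoted statement silently needs), so any correct proof of (4) must use $\Delta \neq 1$ and the structure of $G(\Delta)\rtimes\Gamma$, which your argument never does. For the edge-orbit half of (3), you cannot count $2$-cells of an arbitrary finite presentation complex, because a finite presentation need not contain any of the commutativity relators; the standard repair is a compactness argument (a finitely presented group admits a finite subpresentation of any presentation over a finite generating set, or equivalently write $G(\Delta)\rtimes\Gamma$ as a directed colimit, with surjective transition maps, of graph wreath products over subgraphs containing only finitely many edge orbits), and in the complete-graph case this is exactly the double-coset analysis that is the core of \cite{C06}. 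So the skeleton is reasonable, but conditions (1), (3) (edges) and (4) of the forward direction each need a substantially different argument from the one you propose.
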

\printbibliography
\end{document}